\theoremstyle{definition}
\newtheorem{definition}{Definition}
\theoremstyle{remark}
\newtheorem*{remark}{Remark}
\newtheorem{theorem}{Theorem}
\newtheorem{lemma}[theorem]{Lemma}
\newtheorem{corollary}[theorem]{Corollary}
\newtheorem{observation}[theorem]{Observation}
\theoremstyle{proposition}
\newtheorem{proposition}{Proposition}
\newcommand{\REM}[1]{}
\def\R{{\mathbb{R}}}
\def\I{{\mathbb{I}}}
\def\UCaC{{\emph{UCaC}\xspace}}
\DeclareMathOperator*{\argmax}{argmax} 
\begin{document}

\title{Solving the Batch Stochastic Bin Packing Problem in Cloud: \\ A Chance-constrained Optimization Approach}

\author{Jie Yan}
\authornote{Correspondence to Jie Yan and Qingwei Lin.}
\email{jiey@microsoft.com}
\affiliation{%
  \institution{Microsoft Research}
  \city{Beijing}
  \country{China}
}

\author{Yunlei Lu}
\authornote{Work done during the internship at Microsoft Research, Beijing.}
\email{luyunlei99@gmail.com}
\affiliation{%
  \institution{Peking University}
  \city{Beijing}
  \country{China}
}

\author{Liting Chen}
\authornotemark[2]
\email{98chenliting@gmail.com}
\affiliation{%
  \institution{Microsoft Research}
  \city{Beijing}
  \country{China}
}

\author{Si Qin}
\email{si.qin@microsoft.com}
\affiliation{%
  \institution{Microsoft Research}
  \city{Beijing}
  \country{China}
}

\author{Yixin Fang}
\email{yixfang@microsoft.com}
\affiliation{%
  \institution{Microsoft 365}
  \city{Suzhou}
  \country{China}
}

\author{Qingwei Lin}
\authornotemark[1]
\email{qlin@microsoft.com}
\affiliation{%
  \institution{Microsoft Research}
  \city{Beijing}
  \country{China}
}

\author{Thomas Moscibroda}
\email{moscitho@microsoft.com}
\affiliation{%
  \institution{Microsoft Azure}
  \city{Redmond}
  \state{WA}
  \country{USA}
}

\author{Saravan Rajmohan}
\email{saravar@microsoft.com}
\affiliation{%
  \institution{Microsoft 365}
  \city{Redmond}
  \state{WA}
  \country{USA}
}

\author{Dongmei Zhang}
\email{dongmeiz@microsoft.com}
\affiliation{%
  \institution{Microsoft Research}
  \city{Beijing}
  \country{China}
}


\renewcommand{\shortauthors}{Jie Yan et al.}

\begin{abstract} 
This paper investigates a critical resource allocation problem in the first party cloud: scheduling containers to machines. There are tens of services and each service runs a set of homogeneous containers with dynamic resource usage; containers of a service are scheduled daily in a batch fashion. This problem can be naturally formulated as {\em Stochastic Bin Packing Problem} (SBPP). However, traditional SBPP research often focuses on cases of empty machines, whose objective, i.e., to minimize the number of used machines, is not well-defined for the more common reality with nonempty machines. This paper aims to close this gap. First, we define a new objective metric, {\em Used Capacity at Confidence} (UCaC), which measures the maximum used resources at a probability and is proved to be consistent for both empty and nonempty machines, and reformulate the SBPP under chance constraints. Second, by modeling the container resource usage distribution in a generative approach, we reveal that \UCaC~can be approximated with Gaussian, which is verified by trace data of real-world applications. Third, we propose an exact solver by solving the equivalent cutting stock variant as well as two heuristics-based solvers -- \UCaC~best fit, bi-level heuristics. We experimentally evaluate these solvers on both synthetic datasets and real application traces, demonstrating our methodology's advantage over traditional SBPP optimal solver minimizing the number of used machines, with a low rate of resource violations.
\end{abstract}

\keywords{Optimization, Chance Constraints, Stochastic Bin Packing}

\maketitle

\section{Introduction}
\label{sec:introduction}

Nowadays most large web companies run their services in containers~\cite{microservices, containerization}, and adopt Kubernetes-like systems~\cite{k8s, brendan2016, borg, borg-next} to orchestrate containers and manage resources in modern cloud platforms. One major challenge for the platform is container scheduling. In order to increase utilization, the platform consolidates multiple containers into a machine, where the sum of maximum resources required by containers may exceed the machine capacity. However, this also introduces risks of machine resource violations, which lead to container performance degradation or even service unavailability, resulting in financial loss for the application owner.

\begin{figure} 
  \centering
  \includegraphics[width=.95\linewidth]{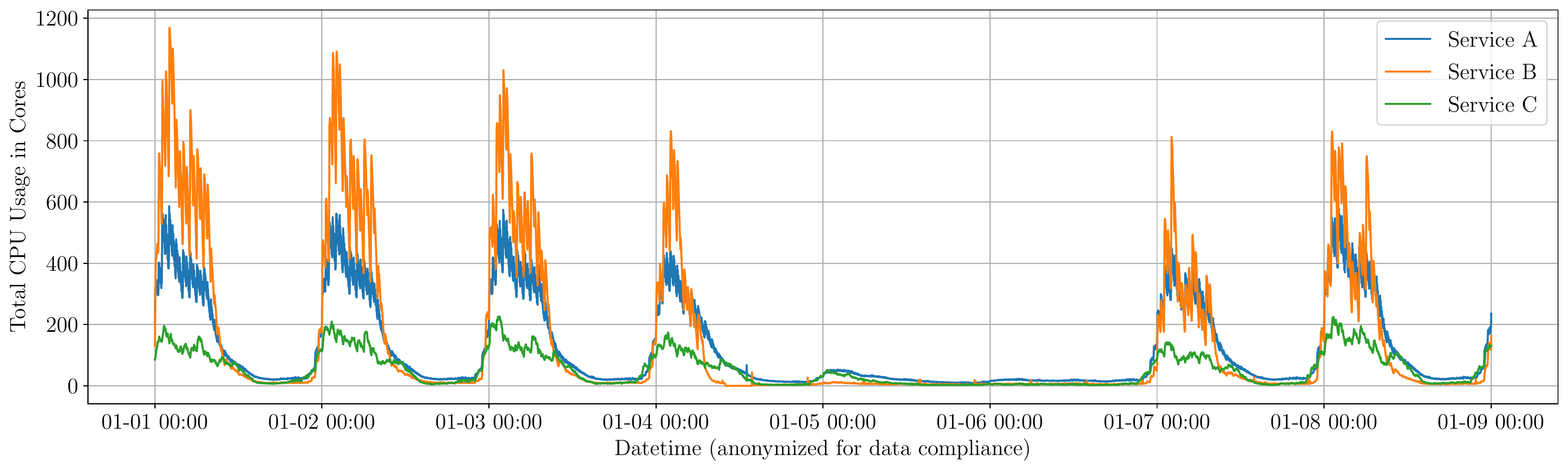}
  \caption{Total CPU core usage of three services in 8 days.}
  \label{fig:ts_cores}
\end{figure}

The container scheduling can be naturally modeled as the {\em Stochastic Bin Packing Problem} (SBPP) \citep{cohen2019, jeffrey2019} to seek an optimal resource utilization satisfying the constraint of keeping violations under a desired low level. To the best of our knowledge, previous research on SBPP assumes that all machines (also referred to as bins) are empty before allocation, and those approaches are evaluated in terms of the number of used bins. However, in practice, total resources required by a service change diurnally and weekly, as shown in Fig.~\ref{fig:ts_cores}. Thus, in order to increase resource utilization, the service often requests to allocate and delete a batch of containers every day. In the platform side, most machines often already have hosted a few containers when new allocation comes in. In the case when nonempty machines can host all or most requested containers, the previous metric, i.e., the number of used bins, fails to differentiate the goodness of allocation strategies. 

We propose a unified approach to the SBPP on both empty and nonempty machines. In particular, we consider the CPU resource utilization as the optimization objective and other types of resources (e.g., memory, storage) as constraints, since the main cloud providers charge for their resources by CPU core usage. As other optimization research work \citep{cohen2019, jeffrey2019}, we further focus on the SBPP of one type of resources (i.e., CPU). However, memory and storage resources as deterministic constraints can be easily processed if necessary. Our main contributions are summarized as follows:

$\bullet$ Reformulate the SBPP by introducing a new metric {\em Used Capacity at Confidence (UCaC)} as the optimization objective. {\em UCaC} is well-defined on both empty and nonempty machines. We prove in theory that for the SBPP on empty machines, minimizing {\em UCaC} is approximately consistent with minimizing the number of used machines.

$\bullet$ Approximate the chance constraints with Gaussian distributions. By modeling the workload process in a generative approach, we show that container resource usage distribution is naturally Gaussian when traffics are enough, and independent to other containers. Our analysis on traces of real web services empirically proves our assumptions.

$\bullet$ Propose three algorithms for the new {\em UCaC}-based SBPP formulation -- online best-fit heuristics, offline bi-level heuristics, and offline solver with cutting stock approach. 
Experiments with both synthetic data and real traces show that our {\em UCaC}-based algorithms perform better or equally well with respect to both {\em UCaC} and traditional metrics of used machines. In particular, when nonempty machines can host all or most containers in requests, our {\em UCaC}-based algorithms show a significant advantage.

\section{Preliminaries}
\label{sec:preliminaries}

\subsection{Container Scheduling in Cloud}
\label{sub:app-scenarios}

Our work targets general industrial infrastructures, such as Google's Borg\cite{borg, borg-next}, Kubernetes\cite{k8s}, and Microsoft AKS\cite{aks}. Without loss of generality, we use an abstract cluster with a minimum set of concepts. {\em Machine} is the bin that holds containers, and its resource capacity is a constant $V$. {\em Cluster} is the collection of $N$ machines. {\em Container} is the resource set (e.g., CPU and Memory) that a service instance is run on. For the container indexed by $j$, its resource usage is a random variable $A_j$. In our setting, the container size is dynamic and the same as its resource usage. We specify the maximum resource that a container can use as {\it limit}; once a container's resource usage reaches {\it limit}, it is throttled. {\em Service} is a container type, which runs a set of homogeneous containers. The number of services is denoted as $K$, and the service $k$ has $m_k$ containers. These concepts are also illustrated in Appendix (Fig.~\ref{fig:cluster}).

A cluster may run many services. Each service owner submits the request of allocating or deleting a number of containers to the cluster {\em scheduler}, which manages the set of machines and decides which containers should be on which machine. The scheduling algorithm is critical for resource utilization and is the target of this paper. We focus on the case that containers of the same service follow the same distribution of resource usage; If not, they should be further divided into more services. This is representative especially for first-party applications of web services (e.g., online meetings, news media, social networks).

\subsection{Chance Constraints}
\label{sub:chance-constraints}

To increase utilization, the scheduler often places multiple containers into one machine, where the sum of resources used by containers is required to not exceed the machine's capacity with confidence $\alpha$ whose value corresponds to a business SLA (Service Level Agreement)\cite{wieder2011service}. When solving the container scheduling problem, it is natural to define feasible solutions with {\em chance constraint}s. Formally, it is defined as follows: 
\begin{equation*}
\Pr\left(g(x; \theta) \leq c \right) \geq \alpha,
\end{equation*}
where $x$ is the decision variable, parameter $\theta$ is some prior random variable, and the constraint $g(x;\theta) \leq c$ satisfies with at least the probability $\alpha \in (0.0, 1.0]$.

When $g(x; \theta)$ follows a distribution that has low order moments as fully statistics, e.g., Gaussian, the chance constraint can be translated into a deterministic one in the form of mean plus uncertainties as $\overline{g}{(x;\theta)} + D(\alpha) \mathcal{U}_g(x;\theta) \leq c$. Here $D(\alpha)$ is a constant depending on $\alpha$. For Gaussian distributed $g(x)$, $D(\alpha) = \Phi^{-1}(\alpha)$ is the $\alpha$-quantile of the standard normal distribution.

\subsection{Stochastic Bin Packing Problem (SBPP)}
\label{sub:sbpp}

Container scheduling is modeled as a bin packing problem in our setting. Throughout this paper, machine and container are equivalent to bin and item in the bin packing problem respectively.

Suppose there are $M$ containers to be scheduled onto $N$ machines. We consider one type of resources -- CPU cores. Assume CPU resource usage of containers is random variables $\{ A_1, A_2, \dots, A_M\}$. The classic stochastic bin packing problem then is defined as the following 0-1 integer programming problem.
\begin{equation}
\label{eq:sbpp}
\begin{aligned}
  &\min_{x, y}{\sum_{i=1}^{N}{y_i}} \\
  s.t., \Pr( & \sum_{j=1}^M {A_j x_{i, j}} \leq V y_i) \geq \alpha, \quad i \in [N] \\
        & \sum_{i=1}^M {x_{i, j}} = 1, \quad j \in [M] \\
        & x_{i, j} \in \{0, 1\},  ~ y_i \in \{0, 1\}, \quad i \in [N], j \in [M] \\
\end{aligned}
\end{equation}
where the decision variable $x_{i,j}$ denotes whether or not container $j$ is  running on machine $i$, while $y_i$ denotes whether or not machine $i$ is used. Here $[N]$ denotes the set of integers $\{1, 2, \dots, N\}$.

Now we consider the chance constraints. Suppose the container's CPU usage is independent with others, and follows Gaussian distribution or, more loosely, a distribution with bounded support, as discussed in previous work~\cite{cohen2019}, the chance constraints in Eq.~\ref{eq:sbpp} can be translated into the following deterministic form.
\begin{equation}
\label{eq:deterministic-constraint-pre}
    \sum_{j=1}^{M} \mu_{j} x_{i,j}+D(\alpha) \sqrt{\sum_{j=1}^{M} b_{j} x_{i,j}} \leq V y_{i}, \quad x_{i, j} \in \{0, 1\}
\end{equation}
where $\mu_j$ is the mean of $A_j$, and $b_j$ is any uncertainty metric of $A_j$ (e.g., variance for Gaussian).

{\bf K-service SBPP.}~In the classic bin packing problem, the decision variable is for each pair of {\em(machine, container)}, leading to a decision space of $N \times M$ scale. There are often thousands of containers and hundreds of machines in reality, and the problem is quickly no longer solvable in a reasonable time using a commercial solver. In this paper, as stated before, we consider the scenario in which there are $K$ services, and containers of the same service are homogeneous and follow the same resource usage distribution.
Then the decision variable can be alternatively defined on {\em (machine, service)} pair, where the decision variable $x'_{i,j}$ denotes the number of containers of service $j$ placed on machine $i$. 
In reality, $K \ll M$, indicating that the SBPP is expected to be solved by commercial solvers. 
With the assumption that CPU usages of containers in the same service are iid (identical and independent distributed), as echoed in Observation \ref{obs:obv-1} and Observation \ref{obs:obv-2}, chance constraints and the SBPP formulation can be equivalently rewritten as follows:
\begin{equation}
\label{eq:sbp-k-services}
\begin{aligned}
\min_{x, y} & \sum_{i=1}^{N} y_{i} \\
\text { s.t. } & \sum_{j=1}^{K} \mu_{j} x_{i,j}+D(\alpha) \sqrt{\sum_{j=1}^{K} b_{j} x_{i,j}} \leq V y_{i}, \quad i \in [N] \\
& \sum_{i=1}^{N} x_{i,j} = m_j, \quad j \in [K] \\
& x_{i, j} \in \I, ~ y_{i} \in\{0,1\}, \quad i \in [N], ~j \in [K]
\end{aligned}
\end{equation}
where $x_{i,j}$ denotes the number of service $j$ running on machine $i$ and $y_i$ denotes whether or not machine $i$ is used. Here, $\I$ is the set of non-negative integers.

We mainly consider the Gaussian distributed container resource usage, since through broad analysis of real trace data, the empirical distribution is approximately Gaussian or Sub-Gaussian with truncation, as detailed in Section~\ref{sec:resource-usage-dist}. Note that, however, this assumption on distributions is not necessary for our work in Section~\ref{sec:formulation} and ~\ref{sec:methodology}.

\section{Modeling container resource usage}
\label{sec:resource-usage-dist}

We explore the trace data collected from a set of web application services running on a cluster. Results reveal that their CPU resource usage approximately follows the Gaussian distribution. Further, by developing a generative model to identify the internal cause process, we conclude that the assumption of Gaussian distribution should be common in most web services.

\subsection{Exploring Data Analysis on Real Traces}
\label{sub:eda}
We take three representative services as examples to analyze the distributions of container CPU usage at peak time. In Fig.~\ref{fig:dist_containers}, we compare the data distribution and fitted Gaussian distribution with the same mean and variance, in forms of the histogram and CDF (cumulative density function). As shown, the data distributions show the following characteristics.

$\bullet$ All three services are approximately Gaussian. A and B services have relatively large variances, while C service's distribution is very narrow. To the best of our experience, such diversity in distributions widely exists in real-world applications.

$\bullet$ The tails of data distributions decay faster than the fitted Gaussian distribution. Strictly speaking, empirical distributions are Sub-Gaussian, and their high quantifies are dominated by Gaussian. This means that the Gaussian assumption in our chance constraints formulation is more conservative than that of real data. 

$\bullet$ On the left side of the figures of service A and B, there are observed higher masses around zero. This is because the application manually reserves some container resources for tolerating accidental traffics.

$\bullet$ The tails of empirical distributions are bounded or truncated. This is because as depicted in section \ref{sub:app-scenarios}, one container can use resource in a range of $(0, \textit{limit})$ where {\it limit} is a per-service defined constant.

\begin{figure}
\centering
\includegraphics[width=.98\linewidth]{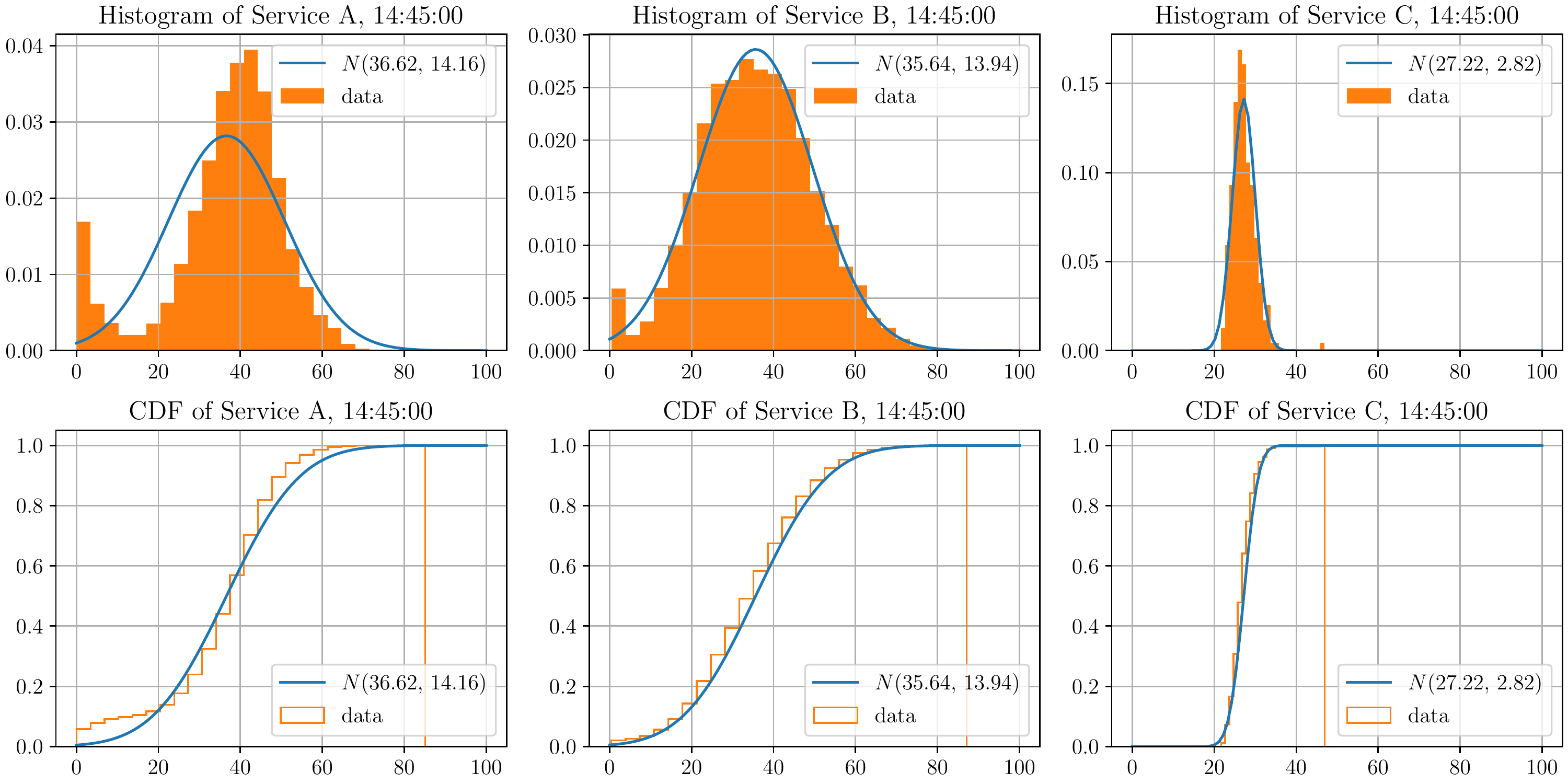}
\caption{Container CPU usage in percentage at a peak time.}
\label{fig:dist_containers}
\end{figure}

\subsection{Explanation in a Generative View}
\label{sub:resource-usage-dist}

Resource usage is a time series generated by a stochastic process driven by workloads assigned to the container. To solve SBPP, we are interested in the distribution of container resource usages {\em at the workload peak time}.

We use a generative view to analyze the container resource usage at a given time point. A container processes the assigned workloads of one service. For simplicity but without loss of generality, we assume the container resource usage is the sum of the assigned workloads' resource usage. Suppose that the workload of service $k$ follows the distribution $P_k$ and the container has $n$ such workloads. Then the container resource usage distribution $Y^{(n)}$ can be described as follows.
\begin{equation}
\label{eq:generative-usage}
\begin{aligned}
  & X_i \sim P_k,
  \quad &Y^{(n)} = \sum_{i=1}^{n}{X_i},
\end{aligned}    
\end{equation}
where $P_k$ is a distribution with $\mathbb{E}[X_i] = \mu$ and $Var[X_i]=\sigma^2 < \infty$.

\begin{observation}
\label{obs:obv-1}
At a given time $t$, resource usages of containers of the same service are independent and identically distributed (i.i.d).
\end{observation}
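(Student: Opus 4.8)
The plan is to derive both properties—identical marginal law and mutual independence—directly from the generative model in Eq.~\ref{eq:generative-usage} together with the homogeneity of containers within a service. First I would make explicit the mild modeling assumption that underlies the construction: at the peak time $t$, incoming requests to service $k$ are routed to its containers by a load-balancing mechanism so that (a) each container accumulates the same number $n$ of concurrent workloads, and (b) each workload's resource usage is an independent draw $X_i \sim P_k$, with distinct workloads routed to distinct containers. Under this assumption the resource usage of any container of service $k$ is exactly $Y^{(n)} = \sum_{i=1}^{n} X_i$ as in the model.

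For the identically-distributed part the argument is short. Since every container of service $k$ sums the same number $n$ of i.i.d. draws from the common distribution $P_k$, its usage is distributed as the $n$-fold convolution of $P_k$, which depends only on $P_k$ and $n$ and not on the particular container. Hence all containers of the service share the same marginal law. If one prefers to allow the per-container counts to fluctuate, the statement should be read as holding conditionally on those counts.

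For independence I would partition the pool of workload draws by container. Because each workload is assigned to exactly one container, the index sets $S_1, S_2, \dots$ collecting the draws of the different containers are pairwise disjoint, and the usages are $Y_c = \sum_{i \in S_c} X_i$. Since the family $\{X_i\}$ is mutually independent, any collection of measurable functions of disjoint blocks of them is again mutually independent; taking each function to be its block sum shows that $\{Y_c\}_c$ are mutually independent. Combined with the previous step, the container usages of a single service are i.i.d.

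The main obstacle is not the algebra but pinning down the right independence hypothesis on the assignment process. In a real cluster all containers of a service are fed by one shared request stream, so if the total number of requests is itself random, the per-container counts become dependent (a multinomial split of the total), and cross-container independence of the $Y_c$ then holds only conditionally on those counts rather than unconditionally. I would therefore frame the observation in the \emph{enough-traffic} regime already invoked in the introduction: as the traffic grows, a law-of-large-numbers argument makes the per-container counts concentrate around their common mean, so the conditioning becomes asymptotically negligible and the clean i.i.d. conclusion is justified.
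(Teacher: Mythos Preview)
Your argument is correct and follows the same line the paper takes: randomized load balancing across homogeneous containers yields independence, and equal workload counts from the common law $P_k$ yield identical marginals. The paper, however, treats this as a modeling \emph{observation} rather than a theorem and offers only a two-sentence informal justification (round-robin routing implies independence; homogeneity plus roughly equal assigned workloads implies identical distribution), so your formal derivation via disjoint-block independence and your careful discussion of conditional versus unconditional independence go well beyond what the authors actually provide.
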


This observation holds in our situation because in service the workload scheduler allocates incoming workloads randomly to containers in round-robin or suchlike, which makes container usage distribution independent. Besides, since all containers of the same service are homogeneous, with roughly the same assigned workloads, their resource usage should be identical in distribution.

\begin{observation}
\label{obs:obv-2}
For a set of containers from different services, at a given time $t$, their resource usages are independently distributed.
\end{observation}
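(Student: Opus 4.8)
The plan is to reduce the cross-service independence claim to the independence of the underlying workload streams, exploiting the fact that each container only ever processes workloads of its own service. First I would fix the time $t$ (the peak time of interest) so that any deterministic, shared seasonal trend across services is held constant, and adopt the generative description of Eq.~\ref{eq:generative-usage}: for a container $c$ belonging to service $k$, its resource usage is $Y_c = \sum_{i=1}^{n_k} X_i^{(c)}$ with each $X_i^{(c)} \sim P_k$ the usage of one assigned workload. The object to analyze is therefore the joint law of $(Y_{c_1}, \dots, Y_{c_r})$, where the containers $c_1, \dots, c_r$ are drawn from pairwise distinct services $k_1, \dots, k_r$.

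The key structural step is the observation that a workload generated by service $k$ is routed only to containers of service $k$: containers are service-specific, so no single workload contributes to the usage of two containers from different services. Consequently, each $Y_{c_\ell}$ is a measurable function of the workload process of service $k_\ell$ alone; formally, $Y_{c_\ell}$ is $\mathcal{F}_{k_\ell}$-measurable, where $\mathcal{F}_{k_\ell}$ is the $\sigma$-algebra generated by service $k_\ell$'s arrivals and their per-workload usages at time $t$.

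Given this, the argument closes by invoking the modeling assumption that distinct services are driven by independent external demand: different web services (e.g., online meetings, news media, social networks) serve logically separate request streams, so the $\sigma$-algebras $\mathcal{F}_{k_1}, \dots, \mathcal{F}_{k_r}$ are mutually independent once the shared time index is fixed. Since random variables that are measurable with respect to independent $\sigma$-algebras are themselves independent, the usages $(Y_{c_1}, \dots, Y_{c_r})$ are independent, which is the claim. Combined with Observation~\ref{obs:obv-1}, this shows that the full collection of container usages decomposes into independent i.i.d.\ blocks, one per service, exactly as needed for the reduction to the $K$-service SBPP of Eq.~\ref{eq:sbp-k-services}.

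I expect the main obstacle to be justifying the independence of the underlying workload streams, rather than the measure-theoretic bookkeeping, which is routine. Independence could fail if services shared a common stochastic driver, for instance a burst of total cluster traffic that simultaneously inflates several services. Fixing $t$ neutralizes the deterministic diurnal and weekly component visible in Fig.~\ref{fig:ts_cores}, but the residual-correlation question must be settled by the operational fact that these first-party services respond to distinct user actions; I would support this with the empirical evidence that per-service peaks are not temporally aligned and, if needed, a direct cross-correlation check on the trace data.
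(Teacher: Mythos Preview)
Your proposal is correct, but it is considerably more elaborate than the paper's own justification and rests on a slightly different mechanism. The paper treats Observation~\ref{obs:obv-2} as a one-sentence modeling remark: it simply says that, \emph{similarly to} Observation~\ref{obs:obv-1}, the randomization performed by the workload scheduler makes the workloads assigned to different containers independent, hence so are their resource usages. In other words, the paper reuses the within-service randomization argument verbatim rather than isolating a cross-service assumption.

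Your argument instead pinpoints the structurally correct source of cross-service independence: containers of distinct services are measurable with respect to disjoint workload streams, and those streams are assumed independent because the services respond to logically separate external demand. This is a genuine refinement. The paper's phrasing, taken literally, only explains why two containers receiving randomized slices of the \emph{same} stream are independent; it does not by itself rule out a shared stochastic driver across services. Your $\sigma$-algebra decomposition makes that extra assumption explicit and shows exactly where it enters, and your discussion of the possible failure mode (a common traffic burst correlating residuals after conditioning on $t$) is precisely the caveat one would want to state. The trade-off is that the paper's version is a lightweight modeling assertion appropriate to an ``Observation,'' whereas yours reads as a formal lemma; both are acceptable, but yours is the more defensible argument.
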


Similarly, because of the randomization in workload assignment, workloads of different containers are independent, and thus distributions of container resource usages.

\begin{corollary}[Gaussian distribution of container resource usage]
For Equation~\ref{eq:generative-usage}, $\lim_{n \to \infty} Y^{(n)} = \sum_{i=1}^{n}{X_i}$ approximately follows a Gaussian distribution.
\end{corollary}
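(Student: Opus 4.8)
The plan is to recognize the claim as an instance of the classical central limit theorem (CLT) and to apply it after a suitable standardization. Note first that the statement ``$\lim_{n\to\infty} Y^{(n)}$ approximately follows a Gaussian'' cannot hold verbatim, since both the mean $n\mu$ and the variance $n\sigma^2$ of $Y^{(n)}$ diverge. So the initial step is to restate it as the correct asymptotic assertion: the standardized sum
\begin{equation*}
Z_n = \frac{Y^{(n)} - n\mu}{\sigma\sqrt{n}}
\end{equation*}
converges in distribution to a standard normal $\mathcal{N}(0,1)$. This is precisely what ``approximately Gaussian'' means for large but finite $n$, namely $Y^{(n)} \approx \mathcal{N}(n\mu,\, n\sigma^2)$.

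Next I would verify the hypotheses. By Observation~\ref{obs:obv-1}, the workloads $X_1,\dots,X_n$ feeding a single container are i.i.d., and by the assumption accompanying Equation~\ref{eq:generative-usage} each $X_i$ has finite mean $\mu$ and finite variance $\sigma^2 < \infty$. These are exactly the conditions of the Lindeberg--L\'evy CLT, so one may directly invoke it to obtain $Z_n \xrightarrow{d} \mathcal{N}(0,1)$, which yields the claim.

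If a self-contained argument is preferred, I would prove the required CLT via characteristic functions. Define $\phi(t) = \mathbb{E}\!\left[e^{\,i t (X_1-\mu)/\sigma}\right]$; finiteness of the variance makes $\phi$ twice differentiable at the origin, with $\phi(0)=1$, $\phi'(0)=0$, $\phi''(0)=-1$, giving the Taylor expansion $\phi(t) = 1 - t^2/2 + o(t^2)$. By independence,
\begin{equation*}
\phi_{Z_n}(t) = \phi\!\left(t/\sqrt{n}\right)^{n} = \left(1 - \frac{t^2}{2n} + o\!\left(\tfrac{1}{n}\right)\right)^{n} \longrightarrow e^{-t^2/2},
\end{equation*}
the characteristic function of $\mathcal{N}(0,1)$. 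L\'evy's continuity theorem then upgrades this pointwise convergence of characteristic functions to convergence in distribution.

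The main obstacle is conceptual rather than computational: making precise what ``$\approx$ Gaussian'' should mean and being explicit that convergence holds only after centering and scaling. On the technical side, the one delicate point in a from-scratch proof is controlling the remainder in the Taylor expansion of $\phi$ uniformly enough to pass the $n$-th power to the limit and invoking L\'evy continuity to justify the distributional conclusion. The finite-variance hypothesis $\sigma^2 < \infty$ is exactly what guarantees $\phi$ is twice differentiable at the origin, and is therefore indispensable to the argument.
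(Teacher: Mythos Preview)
Your proposal is correct and follows essentially the same route as the paper: both reduce the claim to a direct application of the classical central limit theorem for i.i.d.\ finite-variance summands. The paper's proof is a two-line invocation of the CLT concluding $Y^{(n)} \sim \mathcal{N}(n\mu, n\sigma^2)$; you go a step further by making the standardization $Z_n = (Y^{(n)}-n\mu)/(\sigma\sqrt{n})$ explicit and optionally supplying the characteristic-function derivation, which is more rigorous but not a different idea.
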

\begin{proof}
Since $\{X_1, \ldots, X_n\}$ are independent and identically distributed random variables, by Central Limit Theorem we have $\frac{1}{n} Y^{(n)} \sim \mathcal{N}(\mu, \sigma^2/n)$. Thus, $Y^{(n)} \sim \mathcal{N}(n\mu, n\sigma^2)$.
\end{proof}


Note that at workload peaks, the number of workloads per container, $n$, is typically very large; thus, Gaussian distribution widely exists in our real-world traces. With bigger $n$, the container resource usage distribution is also more stationary. The support from the above generative mechanism, rather than only empirical statistics observed from limited data, makes methods based on Gaussian assumption perform more stable in reality.

\section{Problem Formulation}
\label{sec:formulation}
In the classic bin packing problem, the objective is to minimize the number of used machines. However, for cases with nonempty machines, the objective is no longer well-defined. 
To address this issue, we define a new metric {\em UCaC} and then reformulate the SBPP with the objective of minimizing {\em UCaC}. 
Moreover, we prove that with relaxed real-value decision variables, minimizing {\em UCaC} leads to the minimum number of used machines.
With integer decision variables, though minimizing {\em UCaC} does not necessarily lead to the minimum number of machines, we provide a constant factor guarantee from the minimum used machines.

\subsection{A New Metric: Used Capacity at Confidence}
\label{sub:sbp-metrics}

\begin{definition}
({\em UCaC}, Used Capacity at Confidence)
  Suppose on a machine there are $M$ containers sharing resources, and their resource usages are random variables $\{ X_i \sim P_i, i \in [M] \}$. The {\em UCaC} at confidence level $\alpha$ is defined as the minimum value of $U$ that satisfies $\Pr\left( \sum_{i=1}^{M}{X_i} \leq U \right) \geq \alpha$.
\end{definition}

For notation simplicity, we use the operator $U_{\alpha}(x)$ to denote the computation of {\em UCaC} of a vector of random variables $x = \{X_i \}$ at confidence $\alpha$. Suppose resource usages of the $M$ containers are Gaussian distributed, i.e., $X_i \sim N(\mu_i, \sigma_i^2$), then {\em UCaC} of the machine can be further written as follows:
$ U_{\alpha}(x) = \sum_{i=1}^{M} \mu_{i} + D(\alpha) \sqrt{\sum_{i=1}^{M} {\sigma_i^2}}.$

The new metric {\em UCaC} measures the maximum machine level resource usage at the confidence $\alpha$. 
For example, a machine runs three containers whose resource usages ${x_1, x_2, x_3}$ follow distributions $\mathcal{N}(2, 0.5), \mathcal{N}(2, 1)$, and $\mathcal{N}(3, 1.5) $ respectfully, with confidence level $\alpha =0.99$ ($D(0.99) = 2.576$), {\em UCaC} of this machine is calculated as:  $(2 + 2 + 3) + 2.576 * \sqrt{0.5 + 1 + 1.5} = 11.46$.

Naturally, we can extend to measure the resource usage of a cluster using {\em UCaC}, whether its machines are empty or nonempty.
\begin{definition}[{\em UCaC} of a cluster]
  A cluster's {\em UCaC} is the sum of {\em UCaC} of all its machines.
\end{definition}

Hence, {\em UCaC} is a well-defined metric for the SBPP with both empty and nonempty machines.

The final question is: what does lower {\em UCaC} mean in practice? The answer is the capability to serve more containers without increasing the number of nonempty machines in the cluster. 
Note that the estimated available capacity is the total capacity minus the {\em UCaC}.
Thus, less {\em UCaC} for existing containers means more capacity for future demands.


\subsection{Stochastic Bin Packing Reformulation}
\label{sub:sbp-reformulation}
Using the new objective {\em UCaC}, the SBPP on both empty and nonempty machines can be formulated as follows: 
\begin{equation}
\label{eq:initial}
\begin{aligned}
  \min_{x} ~& {\sum_{i=1}^{N}{ \left \{\sum_{j=1}^{K}{(z_{i,j} + x_{i, j})\mu_j} + D(\alpha) \sqrt{\sum_{j=1}^{K}{(z_{i, j} + x_{i, j}) b_j}}\right \}}}\\
  s.t. ~& \sum_{j=1}^{K}{(z_{i,j} + x_{i, j})\mu_j} + D(\alpha) \sqrt{\sum_{j=1}^{K}{(z_{i, j} + x_{i, j}) b_j}} \leq V, \\
       & \sum_{i=1}^{N}{x_{i, j}=m_j},  \quad x_{i, j} \in \I, \quad i \in [N], \quad j\in[K],
\end{aligned}
\end{equation}
where $z_{i, j}$ denotes initial number of containers of service $j$ on machine $i$. 
Given number of requested containers of service $j$, $m_j$, this formulation aims to minimize {\em UCaC} of the cluster after allocation while satisfying the capacity constraint at confidence level $\alpha$.

Note that the deterministic term $\sum_{i=1}^{N}{ \sum_{j=1}^{K}{(z_{i,j} + x_{i, j})\mu_j}}$ in the objective is a constant, thus minimizing {\em UCaC} is equivalent to minimizing the uncertainty term $\sum_{i=1}^{N}{D(\alpha) \sqrt{\sum_{j=1}^{K}{(x_{i, j}+z_{i, j}) b_j}}}$. 
With simple variable replacement and equivalence transformation, we get the following final problem formulation.
\begin{equation}
\label{eq:reformulation}
\begin{aligned}
  \min_{x} ~& \sum_{i=1}^{N}{D(\alpha) \sqrt{\sum_{j=1}^{K}{x_{i, j} b_j} + B_i}}\\
  s.t. ~& \sum_{j=1}^{K}{x_{i, j}\mu_j} + D(\alpha) \sqrt{\sum_{j=1}^{K}{x_{i, j} b_j} + B_i} \leq V - C_i, \quad i \in [N] \\
       & \sum_{i=1}^{N}{x_{i, j}=m_j},  \quad x_{i, j} \in \I, \quad i \in [N], \quad j\in[K],
\end{aligned}
\end{equation}
where $B_j = \sum_{j=1}^{K}{z_{i, j} b_j}$ and $C_i = \sum_{j=1}^{K}{z_{i, j} \mu_j}$.

We can prove that for the SBPP on empty machines, our {\em UCaC}-based formulation is consistent with traditional SBPP that optimizes the number of used machines.

\begin{theorem}[Relaxed Consistency] 
\label{th:relaxed-consistency}
Relax decision variable $x_{i,j}$ to real value, and suppose that $b_j > 0, ~\forall j \in [K], i \in [N]$ in Eq.~\ref{eq:reformulation} and Eq.~\ref{eq:sbp-k-services}. 
Then, minimizing {\em UCaC} leads to the least number of used bins in stochastic bin packing on empty machines.
\end{theorem}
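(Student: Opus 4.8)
The plan is to exploit that on empty machines $z_{i,j}=0$, hence $B_i=C_i=0$, so that the per-machine UCaC reduces to $L_i := \sum_{j}\mu_j x_{i,j} + D(\alpha)\sqrt{s_i}$ with $s_i := \sum_{j} b_j x_{i,j}$, the capacity constraint of Eq.~\ref{eq:reformulation} is exactly $L_i \le V$, and the quantity to be minimized is $D(\alpha)\sum_i \sqrt{s_i}$. The assignment constraints conserve total uncertainty $\sum_i s_i = \sum_j m_j b_j$ and total mean $\sum_i\sum_j \mu_j x_{i,j} = \sum_j m_j \mu_j =: T_\mu$ across the whole feasible region, which gives the identity $\sum_i L_i = T_\mu + D(\alpha)\sum_i \sqrt{s_i}$. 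Thus minimizing UCaC is equivalent to minimizing $\sum_i L_i$, and I would carry the argument entirely in terms of the $L_i$ and this conservation identity.

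The core is a consolidation (exchange) lemma: every minimizer of the relaxed problem has at most one used machine that is not filled to capacity. First I would suppose two used machines $i_1,i_2$ both have slack, $L_{i_1},L_{i_2}<V$, and move an infinitesimal amount $\epsilon$ of some service $j$ present on the source machine to the other. Because the means cancel in the objective and only $\sqrt{s_{i_1}}+\sqrt{s_{i_2}}$ changes, strict concavity of $\sqrt{\cdot}$ makes $\epsilon \mapsto \sqrt{s_{i_1}\pm\epsilon b_j}+\sqrt{s_{i_2}\mp\epsilon b_j}$ strictly decreasing in the direction that sends mass toward the larger $s_i$ (and strictly decreasing in either direction when $s_{i_1}=s_{i_2}$); both machines stay feasible for small $\epsilon$ since the target has slack, contradicting optimality. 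This is precisely where $b_j>0$ is essential: it guarantees that shifting a container genuinely perturbs the uncertainty terms.

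Given the lemma, a short counting argument closes the proof. Let a minimizer use $n$ machines; by the lemma at least $n-1$ of them satisfy $L_i=V$ and the remaining used machine has $L_i>0$, so $\sum_i L_i > (n-1)V$. For any feasible solution $Y$ using $\bar n$ machines, capacity gives $\sum_i L_i(Y)\le \bar n V$, while optimality of the minimizer (via the identity $\sum_i L_i = T_\mu + D(\alpha)\sum_i\sqrt{s_i}$ and the fact that $Y$ cannot have smaller $\sum_i\sqrt{s_i}$) gives $\sum_i L_i(Y)\ge \sum_i L_i(\text{minimizer})$. Chaining, $\bar n V \ge \sum_i L_i(Y) \ge \sum_i L_i(\text{minimizer}) > (n-1)V$, so $\bar n \ge n$; hence no feasible packing uses fewer than $n$ machines, and the minimizer attains the least number of used bins, which is exactly the optimum of the relaxed Eq.~\ref{eq:sbp-k-services}.

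I expect the exchange lemma to be the main obstacle: one must verify that a strictly improving feasible move always exists whenever two machines have slack, in particular handling the boundary case $s_{i_1}=s_{i_2}$ through the second-order (strict concavity) behavior rather than the vanishing first derivative, and checking that a moved service can always be chosen so that the perturbation remains in the relaxed feasible region. The counting step is then routine once the conservation identity is in hand.
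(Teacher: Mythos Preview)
Your proposal is correct and follows essentially the same approach as the paper: first establish via the strict concavity of $\sqrt{\cdot}$ that any UCaC-optimal allocation has at most one non-full used machine, then deduce the machine count by a capacity/counting argument. The paper phrases the second step as a contradiction using unused capacities $\Delta_1,\Delta_2$ rather than your conservation identity $\sum_i L_i = T_\mu + D(\alpha)\sum_i\sqrt{s_i}$, but the two arguments are equivalent and your chaining $\bar n V \ge \sum_i L_i(Y) \ge \sum_i L_i(\text{minimizer}) > (n-1)V$ is in fact a cleaner packaging of the same inequality.
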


\begin{proposition}
\label{th:marginal-decrease}
Consider the function $y = \sqrt{x}$ with $\text{\em dom}(x) = \R^+$. Since $\frac{\partial y}{\partial x} = \frac{1}{2\sqrt{x}}$ is monotonically decreasing, the marginal change $\Delta y$ by $\Delta x$ decreases. That means for any $0 < x_1 < x_2$ and $\Delta x > 0$, $\sqrt{x_1 + \Delta x} - \sqrt{x_1} > \sqrt{x_2 + \Delta x} - \sqrt{x_2}$ holds.
\end{proposition}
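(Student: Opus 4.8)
The plan is to reduce this to an elementary monotonicity statement, since the content is simply that $\sqrt{\cdot}$ is strictly concave. The cleanest route I would take is to rationalize the difference by its conjugate. For all $x \in \R^+$ and $\Delta x > 0$,
\[
\sqrt{x + \Delta x} - \sqrt{x} = \frac{(x+\Delta x) - x}{\sqrt{x+\Delta x} + \sqrt{x}} = \frac{\Delta x}{\sqrt{x+\Delta x} + \sqrt{x}},
\]
which is legitimate because the denominator is strictly positive. This rewriting isolates the entire dependence on $x$ in the denominator.

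The key step then follows immediately: the numerator $\Delta x$ is a fixed positive constant, while the denominator $\sqrt{x+\Delta x} + \sqrt{x}$ is a strictly increasing function of $x$, since each summand is strictly increasing. Hence for $0 < x_1 < x_2$ we have $\sqrt{x_1+\Delta x} + \sqrt{x_1} < \sqrt{x_2+\Delta x} + \sqrt{x_2}$, and dividing the fixed positive numerator by the larger denominator yields a strictly smaller value, giving
\[
\sqrt{x_1 + \Delta x} - \sqrt{x_1} = \frac{\Delta x}{\sqrt{x_1+\Delta x} + \sqrt{x_1}} > \frac{\Delta x}{\sqrt{x_2+\Delta x} + \sqrt{x_2}} = \sqrt{x_2 + \Delta x} - \sqrt{x_2},
\]
which is exactly the claim.

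As an alternative that more directly mirrors the hint in the statement (the derivative $1/(2\sqrt{x})$ being decreasing), I could write the difference as $\int_{x}^{x+\Delta x} \tfrac{1}{2\sqrt{t}}\,dt$ via the fundamental theorem of calculus and compare the $x_1$ and $x_2$ integrals through the substitution $t \mapsto t + (x_2 - x_1)$, which maps $[x_1, x_1+\Delta x]$ onto $[x_2, x_2+\Delta x]$ and exhibits the integrand as pointwise strictly smaller on the shifted interval. Both arguments are short, and I would favor the conjugate trick because it avoids calculus and makes the strict inequality transparent. There is no real obstacle here; the only points requiring the slightest care are noting that the denominator is strictly positive so the division is valid, and that strictness of the inequality follows from $x_1 \neq x_2$ forcing the two denominators to differ.
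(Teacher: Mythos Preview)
Your proof is correct. The paper does not supply a separate proof for this proposition; the justification is folded into the statement itself, namely that $\frac{\partial y}{\partial x} = \frac{1}{2\sqrt{x}}$ is monotonically decreasing, so the marginal increment in $y$ per unit $\Delta x$ shrinks as $x$ grows. Your alternative argument via the fundamental theorem of calculus is precisely a rigorous rendering of that one-line rationale.

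Your primary route, the conjugate rationalization $\sqrt{x+\Delta x}-\sqrt{x}=\Delta x/(\sqrt{x+\Delta x}+\sqrt{x})$, is genuinely different and more elementary: it avoids calculus entirely, makes the strict inequality immediate from the strict monotonicity of the denominator, and sidesteps any appeal to concavity or the mean value theorem. The paper's approach has the virtue of generality (it would apply verbatim to any strictly concave function), whereas your conjugate trick is specific to square roots but yields a cleaner, self-contained algebraic verification.
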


\begin{proof}[Proof of Theorem~\ref{th:relaxed-consistency}]

We first prove that \textit{at the optimal UCaC, there is at most one machine is not full.} Otherwise, we can always find two non-full machines, say $i$ and $i'$. Suppose their uncertainty terms satisfy $\sqrt{\sum_{j=1}^{K}{x_{i,j} b_j}} \leq \sqrt{\sum_{j=1}^{K}{x_{i',j} b_j}}$. Then by Proposition~\ref{th:marginal-decrease}, after moving some amount of $i$'s container to $i'$, say $\Delta x_{i,j'}b_{j'}$, we have 
$
\sqrt{\sum_{j=1}^{K}{x_{i,j} b_j - \Delta x_{i,j'} b_{j'}}} + \sqrt{\sum_{j=1}^{K}{x_{i',j} b_j} + \Delta x_{i,j'} b_{j'}} < \sqrt{\sum_{j=1}^{K}{x_{i,j} b_j}} + \sqrt{\sum_{j=1}^{K}{x_{i',j} b_j}}
$
holds; given that $\sum_{j=1}^{K}{x_{i, j}\mu_j} + D(\alpha) \sqrt{\sum_{j=1}^{K}{x_{i, j} b_j} + B_i} < V - C_i$, it is always possible to find such $\Delta x_{i,j'} $ that satisfies
$
\sum_{j=1}^{K}{x_{i, j}\mu_j}+\Delta x_{i,j'} \mu_{j'} + D(\alpha) \sqrt{\sum_{j=1}^{K}{x_{i, j} b_j} +\Delta x_{i,j'}b_{j'} + B_i} \leq  V - C_i.
$
That means {\em UCaC} of this cluster can be reduced, which contradicts our assumption of optimal $UCaC$.

We then prove that \textit{at the optimal UCaC, the number of used machines is equal to the least used machines.} Otherwise, we assume the optimality of Eq.~\ref{eq:sbp-k-services} is $n_1$ machines and  the optimal solution  of Eq.~\ref{eq:reformulation} corresponds to $n_2$ machines. By definitions, $n_1 \leq n_2$ and $U_1 \geq U_2$ holds, where $U_1$ and $U_2$ are {\em UCaC} of the optimal solutions of Eq.~\ref{eq:sbp-k-services} and \ref{eq:reformulation} in respect. 
Now we assume $n_1 < n_2$. Let $\Delta_1$ and $\Delta_2$ be the unused capacity of th solutions corresponding to optimal used bins  and UCaC. Since there is at most one machine is not full, $ (V-\Delta_2)>0$. Then $U_2 - U_1 = (n_2 V  - \Delta_2) - (n_1 V  - \Delta_1) = (n_2 - n_1) V - (\Delta_2 - \Delta_1) > (V - \Delta_2) + \Delta_1 > 0$ which contradicts the fact of $U_1 \geq U_2$. Thus, $n_1 = n_2$.
\end{proof}

For the integer type decision variables, however, minimizing {\em UCaC} does not necessarily lead to a minimum number of used machines, especially in the extreme case when the ratio of variance against the mean of the resource usage $A_j$ is zero. 
The following theorem guarantees that the number of used machines corresponding to {\em UCaC} minimized can be bounded by the minimum number of used machines.

\begin{theorem}[Bounded Number of Used Machines] 
\label{th:bound}
The optimal solution of Eq.~\ref{eq:reformulation} that minimizes {\em UCaC} on empty machines uses at most $\frac{8}{3}N^*$ machines, where $N^*$ is the optimal number of used machines by solving Eq.~\ref{eq:sbp-k-services}.
\end{theorem}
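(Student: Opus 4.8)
The plan is to analyze the problem through per-machine \emph{UCaC}-loads and an exchange argument. For a used machine $i$ in the \emph{UCaC}-optimal solution of Eq.~\ref{eq:reformulation} on empty machines ($B_i=C_i=0$), write its mean-load $s_i=\sum_{j}x_{i,j}\mu_j$ and deviation-load $t_i=\sqrt{\sum_j x_{i,j}b_j}$, so that feasibility reads $\ell_i := s_i + D(\alpha)t_i \le V$ and the objective is exactly the sum $U=\sum_i \ell_i$. First I would record two easy facts about the optimal value: letting $U^\ast$ be the \emph{UCaC} of the machine-minimizing solution of Eq.~\ref{eq:sbp-k-services}, every one of its $N^\ast$ machines satisfies $\ell_i\le V$, so $U^\ast\le N^\ast V$; and since the \emph{UCaC}-optimal value $U$ is a minimum over the same feasible region, $U\le U^\ast\le N^\ast V$.

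The engine of the bound is a merge step driven by Proposition~\ref{th:marginal-decrease}. Its concavity consequence gives subadditivity, $\sqrt{t_i^2+t_{i'}^2}\le t_i+t_{i'}$, so relocating all containers of a machine $i'$ onto a machine $i$ creates a merged load $s_i+s_{i'}+D(\alpha)\sqrt{t_i^2+t_{i'}^2}\le \ell_i+\ell_{i'}$, with strict inequality whenever both machines carry positive variance. Hence at optimality no two positive-variance machines can be merged feasibly, which forces $\ell_i+\ell_{i'}>V$ for every such pair; consequently at most one positive-variance machine can have $\ell_i\le V/2$, and therefore $U=\sum_i\ell_i>(n-1)V/2$, where $n$ is the number of used machines. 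Chaining this with $U\le N^\ast V$ gives $n\le 2N^\ast\le \tfrac{8}{3}N^\ast$ in the regime where every used machine holds at least one stochastic ($b_j>0$) container.

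The hard part is precisely the low-variance regime flagged after Theorem~\ref{th:relaxed-consistency}: when some $b_j=0$, or a machine holds only variance-free containers, the merge no longer strictly lowers \emph{UCaC}, so the objective becomes indifferent to how those containers are packed and the ``at most one half-full machine'' dichotomy collapses. To recover a constant I would split the containers into a stochastic part ($b_j>0$) and a deterministic part ($b_j=0$): bound the machines carrying stochastic mass by the merge argument above, bound the purely deterministic packing by the classical bin-packing fact that at most one bin can be under half full (so it costs at most a factor two against the mean lower bound $N^\ast\ge \sum_j m_j\mu_j / V$), and then recombine the two counts. The delicate bookkeeping is in controlling the overlap between the mean budget and the variance budget, since the two container types generally share machines, and in absorbing the integrality rounding; tracking these carefully is what inflates the clean factor $2$ up to the stated $\tfrac{8}{3}$, and making that accounting tight is where I expect the real work to be.
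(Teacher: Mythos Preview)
Your merge-and-pigeonhole argument is correct and, as you observe, already delivers $n\le 2N^\ast$ in the positive-variance regime, sharper than the stated $\tfrac{8}{3}N^\ast$. The paper, however, does not obtain $\tfrac{8}{3}$ by patching a zero-variance case. It starts from the same pairwise-infeasibility observation but then follows a weight-counting route: after normalizing $\mu_j^\ast=\mu_j/V$ and $b_j^\ast=D(\alpha)^2 b_j/V^2$, it proves the lemma that any infeasible set $T$ satisfies $\sum_{j\in T}(\mu_j^\ast+b_j^\ast)>\tfrac34$ (because $x+y>1$ with $x=\sum\mu_j^\ast$, $y=\sqrt{\sum b_j^\ast}$ and $x\le 1$ forces $x+y^2>x+(1-x)^2\ge\tfrac34$), sums this over all ordered pairs of used machines to get $2(N-1)\sum_j(\mu_j^\ast+b_j^\ast)>\tfrac34\,N(N-1)$, and bounds the same sum from above by $N^\ast$ since each machine in the machine-optimal packing satisfies $\sum_{j\in S_i^\ast}(\mu_j^\ast+b_j^\ast)<1$. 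The constant $\tfrac{8}{3}=(3/8)^{-1}$ falls straight out of that arithmetic, not from any stochastic/deterministic split.

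So your route is genuinely different and tighter; the paper's argument is looser but works directly with item weights rather than machine loads. Your explanation of where the $\tfrac83$ comes from is the one real misstep: the paper's proof also invokes the strict inequality $\sqrt{a+b}<\sqrt a+\sqrt b$ for every pair of used machines and thus tacitly assumes positive variance throughout, exactly as your merge step does. The zero-variance degeneracy you flag is real, but it is a gap the paper's own proof shares rather than the source of the looser constant, so the bookkeeping you anticipate for that case is not what drives the bound up from $2$ to $\tfrac83$.
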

The proof is given in Appendix \ref{app:proof-of-th5}.

\section{Methodology}
\label{sec:methodology}

In this section, we propose three algorithms for the SBPP that optimizes {\em UCaC} -- online Best-Fit heuristic, offline bi-level heuristic and offline suboptimal solver using a cutting stock approach.

\subsection{Online Best-Fit Heuristic Algorithm}
\label{sub:solve-online-heuristic}

{\em Best-Fit} is a greedy algorithm for the bin packing problem. For the SBPP that optimizes cluster {\em UCaC}, the Best-Fit heuristic allocates the new container to the machine that is with maximum {\em UCaC} after allocation and can fit the container. If no such machine exists, the new container is allocated to a new empty machine. Let $x_i$ be the allocation on machine $i$ and $\Delta x$ denote the new container, 
the Best-Fit heuristic selects machine $i^* = \argmax_{i \in [N], U_\alpha (x_i + \Delta x) \leq V} \{ U_\alpha (x_i + \Delta x)  \}$ for the new container if such machine exists.

In classic deterministic bin packing, the intuition of Best-Fit is to fit containers with the least resulted fragment at the decision time. It can be concluded that in our SBPP formulation, Best-Fit greedily minimizes {\em UCaC}.
In the online setting where container requests are processed sequentially, the Best-Fit heuristic is one of the most robust and efficient methods in practice to our knowledge.

\subsection{Offline Heuristic Algorithm} 
\label{sub:solve-offline-heuristic} 

In the cloud applications, when a batch of containers of multiple services needs to be allocated simultaneously, the information of containers in the batch is known at decision time. Thus, offline algorithms can be utilized since they generally have better performance than online algorithms. We therefore propose a bi-level offline heuristic (Algorithm \ref{alg:solver-bilevel-heuristics}) in this subsection and a sub-optimal offline algorithm with better optimality in Section \ref{sub:solve-offline-cuttingstock}.

The details of the proposed bi-level heuristic is given in Appendix (Alg.~\ref{alg:solver-bilevel-heuristics}). The main intuitions are as follows.
\begin{itemize}
    \item Sort machines by cumulative uncertainty $\{B_i\}$ non-increasingly. The
    machine with larger \textit{uncertainty} term $B_i$ is preferred,
    since {\em UCaC} is a submodular function and allocating new containers on the machine with larger $B_i$ minimizes cluster {\em UCaC}.
    The underlying motivation is the same with the best-fit heuristics.
    \item Sort containers by metric $\{{b_j}/{\mu_j}\}$ non-increasingly.
    Container with larger \textit{normalized uncertainty} (${b_j}/{\mu_j}$) is preferred,
    since allocating containers with larger \textit{normalized uncertainty} greedily optimizes cluster {\em UCaC}.
\end{itemize}
The insights above are motivated by the proof of Theorem~\ref{th:relaxed-consistency}.


\subsection{Offline Algorithm: Cutting Stock Approach}
\label{sub:solve-offline-cuttingstock}

In the case where the number of services $K$ is not too large, and there is enough historical data of different services, the bin packing problem with empty machines can be efficiently solved as a cutting stock problem (CSP, detailed in Appendix \ref{app:solve-sbp-empty}).
In this subsection, we propose a cutting stock formulation of the SBPP for nonempty machines and a corresponding offline algorithm (Algorithm \ref{alg:solver-cutting-stock}).

\subsubsection{The  CSP formulation}
The CSP is solved by generating patterns iteratively.
A \textit{pattern} $p_j\in \R^K$ is a combination of containers in each service that satisfies the constraints
\begin{align}
& \sum_{k=1}^{K} \mu_{k} p_{kj} + D(\alpha) \sqrt{\sum_{k=1}^{K} b_{k} p_{kj}} \leq V, ~ p_{kj}\in \I, \quad k \in [K] \label{eq:pattern1}.
\end{align}
Constraint (\ref{eq:pattern1}) guarantees that the pattern is feasible.
For the SBPP with empty bins, one can optimize the continuous relaxation of the CSP (\ref{eq:cutting-stock-relax}) by finding a pattern set $P\in\R^{K\times L}$ using the column generation method (detailed in Appendix \ref{app:solve-sbp-empty}). 
The column generation method is a procedure that adds a single pattern (column) to $P$ at each iteration until no patterns can be generated by this method.

The following proposition shows that given a proper pattern set, the SBPP with either nonempty or empty bins can be efficiently solved by the CSP formulation (\ref{eq:cutting-stock-generalized}) that optimizes {\em UCaC}.
\begin{proposition}
\label{proposition:non-empty-cutting-stock}
  Given service distributions $(\mu_k, b_k)$,  requests $m_k$ for $k \in [K]$, and a  pattern set $P$, 
  suppose containers in all nonempty machines can be covered by some patterns in $P$, i.e., the number of containers of each service in each nonempty machine is not larger than the number of containers of this pattern,  then the cluster UCaC can be optimized by optimizing the  following cutting stock problem:
  \begin{equation}
    \label{eq:cutting-stock-generalized}
    \begin{aligned}
    \min_{v, w} ~& \sum_{j \in [P]} v_j u_j = \text{{\em UCaC}} \\
    \text{s.t.} ~& (p_j - z_i) w_{ij} \geq 0,  \quad i, j \in [N]\times [P] \\
            & \sum_{j \in [P]} p_{k j} v_j \geq m_k + \sum_{i\in[N]} z_{ik}, \quad k \in [K]\\
            & \sum_{i=1}^{n} w_{ij} = v_j,  \quad j \in [P] \\
            & \sum_{j \in [P]} w_{ij} = 1,  \quad i \in [N_0] \\
            & \sum_{j \in [P]} w_{ij} \leq 1,  \quad i \notin [N_0] \\
            & w_{ij} \in \{0, 1\}, \quad v_j \in \I,
    \end{aligned}
  \end{equation}
  where $v_j$ is the number of times pattern $p_j$ is used, $u_j$ is the UCaC of pattern $p_j$, 
  $w_{ij} = 1$ only if pattern $p_j$ is used in machine $i$, 
  $[n_0]$ is the index set of nonempty machines and $[P]$ is the index set of patterns.
\end{proposition}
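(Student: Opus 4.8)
The plan is to establish the proposition by showing that the minimum of the cutting stock objective $\sum_{j\in[P]} v_j u_j$ equals the optimal cluster \emph{UCaC} attained by Eq.~\ref{eq:reformulation}; since both are minimizations of the same physical quantity, it suffices to exhibit an objective-preserving correspondence between the feasible solutions of the two formulations and then argue that their optima coincide. The conceptual content is the classical reinterpretation of a bin-packing assignment (where $x_{i,j}$ is the number of service-$j$ containers added to machine $i$) as a choice, for each machine, of a single \emph{final configuration} drawn from the pattern set $P$, with $v_j$ counting how many machines adopt pattern $p_j$ and $w_{ij}$ recording which machine adopts which.

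For the first direction I would take any feasible $x$ for Eq.~\ref{eq:reformulation} and form, for each machine $i$, its post-allocation configuration $c_i = z_i + x_i \in \I^K$. The capacity constraint of Eq.~\ref{eq:reformulation} is precisely the pattern-feasibility constraint (\ref{eq:pattern1}) applied to $c_i$, so each $c_i$ is a feasible pattern; taking $P$ to contain all such patterns (as produced by the column-generation procedure), I set $w_{ij}=1$ when $p_j=c_i$, leave empty machines with $c_i=0$ unassigned, and set $v_j=\sum_i w_{ij}$. One then checks the CSP constraints directly: dominance $(p_j-z_i)w_{ij}\ge 0$ follows from $c_i\ge z_i$ (because $x_i\ge 0$); each nonempty machine receives exactly one pattern and each empty machine at most one; and the demand constraint holds with equality, since $\sum_j p_{kj}v_j=\sum_i c_{ik}=m_k+\sum_i z_{ik}$. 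Because machine $i$'s \emph{UCaC} equals $u_j$ when $c_i=p_j$, the cluster \emph{UCaC} equals $\sum_i u_{j(i)}=\sum_j v_j u_j$, so the objectives coincide and the CSP optimum is at most the optimum of Eq.~\ref{eq:reformulation}.

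For the reverse direction I would start from an optimal CSP solution $(v,w)$, assign to each machine $i$ the unique pattern $p_{j(i)}$ with $w_{ij}=1$ (or the empty configuration when $i\notin[N_0]$ and $\sum_j w_{ij}=0$, in which case $z_i=0$), and set $x_i = p_{j(i)} - z_i$. The dominance constraint gives $x_i\ge 0$, so $x_i\in\I^K$; the pattern-feasibility of $p_{j(i)}$ gives the capacity constraint of Eq.~\ref{eq:reformulation}; and the demand constraint yields $\sum_i x_{ik}\ge m_k$. The coverage hypothesis is used precisely here: it guarantees that every nonempty machine admits some pattern in $P$ dominating its initial load $z_i$, so the CSP feasible region, and hence this map, is nonempty. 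The key point to settle is that at optimality the demand is met \emph{exactly}: if $\sum_i x_{ik}>m_k$ for some $k$, then some machine with $x_{ik}\ge 1$ carries a surplus container, and replacing its pattern $p_{j(i)}$ by the dominated pattern $p_{j(i)}-e_k$ (still feasible by (\ref{eq:pattern1}), still dominating $z_i$, hence in $P$) strictly decreases $u_{j(i)}$ and therefore the objective, contradicting optimality. Thus the optimal CSP solution maps to a feasible solution of Eq.~\ref{eq:reformulation} of equal \emph{UCaC}, giving the matching inequality and hence equality of the two optima.

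I expect the main obstacle to be the careful treatment of the inequality demand constraint $\sum_j p_{kj}v_j\ge m_k+\sum_i z_{ik}$ together with the exact role of $P$. The forward map needs $P$ rich enough to contain the configurations of an optimal allocation, while the trimming argument in the reverse direction needs $P$ to be closed under removing a single container, so that a dominated feasible pattern is always available; both hold when $P$ is the full set of feasible patterns. What ultimately drives the tightness of the $\ge$ constraint is the strict monotonicity of \emph{UCaC} in the number of containers, which follows from $\mu_k>0$ together with the increasing, concave $\sqrt{\cdot}$ term underlying Proposition~\ref{th:marginal-decrease}; this is exactly what makes any surplus container strictly suboptimal and thereby pins the CSP optimum to the true optimal cluster \emph{UCaC}.
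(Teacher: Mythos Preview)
The paper states this proposition without proof: after Eq.~\ref{eq:cutting-stock-generalized} the text moves directly to ``Solving the CSP'' and the appendices contain no argument for Proposition~\ref{proposition:non-empty-cutting-stock}. So there is no paper proof to compare against, and your proposal stands on its own.

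Your two-direction correspondence is the natural way to argue this, and the mechanics (machine configuration $c_i=z_i+x_i$ becomes a pattern; dominance encodes $x_i\ge 0$; the objective $\sum_j v_j u_j$ equals cluster \emph{UCaC}) are correct. You also correctly isolate the one real subtlety: the proposition only assumes that $P$ \emph{covers} the initial loads $z_i$, which guarantees CSP feasibility but not that the CSP optimum matches the true optimum of Eq.~\ref{eq:reformulation}. Your forward map needs $P$ to contain the configurations of some optimal allocation, and your trimming step in the reverse direction needs $P$ to be closed under removing a single container; neither follows from the stated hypothesis. You are right that both hold when $P$ is the full feasible-pattern set, and the paper's surrounding text implicitly treats $P$ as produced by column generation, so in context this is a reasonable reading. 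Just be explicit that, as literally stated, the proposition should be read as ``optimizes \emph{UCaC} over the assignments expressible with patterns in $P$,'' and your equality with Eq.~\ref{eq:reformulation} requires the richer $P$ you describe.
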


\subsubsection{Solving the CSP}
We now propose the framework to solve the CSP (\ref{eq:cutting-stock-generalized}).
The proposed Algorithm \ref{alg:solver-cutting-stock} consists of three separated parts: generating a feasible pattern set (\texttt{PatternGen}), solving the generalized CSP (\texttt{SolveCSP}) and compute the container placement.
\begin{algorithm}
\DontPrintSemicolon
\caption{CSP Algorithm for SBPP}
\label{alg:solver-cutting-stock}
\SetAlgoVlined
\SetKwInOut{Input}{Input}
\SetKwInOut{Output}{Output}
\SetKw{KwBy}{by}
\Input{Request $m \in \I^K$}
\Input{Parameters of Equation \ref{eq:reformulation}: $\{ \{(\mu_j, b_j) \}, z, V, \alpha\}$}
\Output{Mapping of containers to machines $x \in I^{N \times K}$}
\sf{
\Begin{
    \tcp{Pattern Generation}
    \nl $P \gets $ PatternGen ($ \{(\mu_j, b_j) \}, z, V, \alpha $); \\
    
    \tcp{Solve the CSP (\ref{eq:cutting-stock-generalized})}
    \nl   $w \gets $ SolveCSP $\left(\{(\mu_j, b_j) \}, z, V, \alpha, P \right) $;\\
    
    \tcp{Compute Container Placement}
    \nl $x \gets $ PatternCombinationToContainerPlacement ($w, P$); \\ 
} 
} 
\end{algorithm}

The first part of the algorithm (\texttt{PatternGen}) can be computed ahead of the container resource usage peaks, 
since generating a pattern set by the column generation method can be computationally expensive compared to solving the CSP.

\texttt{SolveCSP} solves the generalized CSP (\ref{eq:cutting-stock-generalized}). 
Note that the optimization problem is a  mixed-integer linear programming problem, which can be efficiently solved by commercial solvers that give better sub-optimal solutions than online or offline heuristics. 

In the end, the container placement $x_{ij}$ for new requests is computed from $w_{ij}$ output by \texttt{SolveCSP}.
Thus, $x_{i} = p_j - z_i$ when pattern $p_j$ is used to cover machine $i$.

\section{Experiments}
\label{sec:experiments}
In this section, we evaluate the performance of our proposed methods for the {\em UCaC}-based SBPP formulation on both synthetic datasets and real cloud traces. We refer to the three algorithms in Section \ref{sec:methodology} as: {\bf BF-UCaC} (best-fit with {\em UCaC}), {\bf BiHeu} (offline bi-level heuristics) and {\bf CSP-UCaC} (solving cutting stock problem minimizing {\em UCaC}). We compare with two baselines: {\bf BF-n$\sigma$} and {\bf CSP-Mac}. BF-n$\sigma$ is the deterministic Best-Fit algorithm, in which the container size is a deterministic value estimated by the n-Sigma method ($n = \Phi^{-1}(\alpha)$). CSP-Mac minimizes the number of used {\bf machines}, and is implemented in the same algorithmic framework as CSP-UCaC.

In CSP-UCaC and CSP-Mac, Gurobi 9.5~\cite{gurobi} is used to solve the optimization problems in Section \ref{sub:solve-offline-cuttingstock}. 
For all experiments, we use a simulated cluster where 4,000 machines run on and machine capacity is 31.58 CPU cores. 

\subsection{Experiments on Synthetic Datasets}
\label{sub:experiments-sync}

The synthetic datasets are generated as follows. (1) The per-service container resource usage distributions are sampled from a pool of 17 Gaussian distributions summarized from real traces and then multiplying the standard deviation with a random factor, i.e., $(\mu, \sigma' = \sigma \times \mathcal{U}(0.9, 1.1))$. 
(2) The instances of container resource usages are sampled by the per-service distributions independently and then truncated to $(0, \text{limit})$, which are used to estimate the violations.
(3) The nonempty container layout is generated like this: stochastic bin packing on empty machines and then randomly delete about half containers. 
(4) The requests are generated by computing the difference between the target container layout -- a predefined initial layout multiplying a scale factor to each service container number, and the current layout.

\subsubsection{Experiments on nonempty cluster}

We test two cases on non-empty cluster layout, named scale-up and scale-down.
The nonempty cluster layout (i.e., the mapping of containers to machines) is constructed as follows. We first randomly generate the per-service container resource usage distributions.
Then, we build an {\em initial layout} by solving with bf CSP-UCaC. 
Finally, we evict part of the containers from the layout according to the rate given in Table~\ref{tab:synthetic_stats}. 

In the scale-down (scale-up) case, after allocating the requested containers, the number of containers is smaller (larger) than that of the {\em initial layout}. 
To make results with different initial layouts comparable, we normalize the {\em UCaC} and the number of used machines (\#Mac) with the value of BF-n$\sigma$.
We perform 5 runs of tests with different random seeds and report the average values. \\
\textbf{Results:} ~ 
Table~\ref{tab:perf-scale-down} and \ref{tab:perf-scale-up} show detailed performance comparison for different number of services $K$ and confidence level $\alpha$ in the scale-down and scale-up cases.

Compared to the deterministic best fit BF-n$\sigma$, our three {\em UCaC}-based algorithms have lower {\em UCaC} and less \#machines in every test case.
Although the violation rates are higher for the three algorithms, they are still less than the given risk $1-\alpha$ in most cases, showing less resource usage with a satisfying confidence level.
\footnote{For rare cases in $K=10$ and $K=15$, the violation rate is more than $1-\alpha$; this is because of the container resource usage truncation, which leads to a minor positive bias of empirical mean and variances from that of the sampling distributions.} 
The results also show that our proposed algorithms have similar performance over different test cases.

Compared to CSP-Mac, which optimizes \#machines, our proposed methods have similar performance on {\em UCaC} and used machines for the scale-up case (Table~\ref{tab:perf-scale-up}).
However, for the scale-down case (Table~\ref{tab:perf-scale-down}) in which all new requests can be allocated into existing nonempty machines, our proposed methods achieve lower {\em UCaC}, indicating lower resource usage on the cluster.
The results in the scale-down case demonstrate the effectiveness of the metric {\em UCaC} and the {\em UCaC}-based SBPP formulation.

\begin{table}
\centering
\scalebox{0.75} 
{
\begin{tabular}{ccccc|ccc}%
\toprule
& & \multicolumn{3}{c}{$\alpha = 99.9\%$} & \multicolumn{3}{c}{$\alpha = 99\%$}  \\
\cmidrule(lr){3-5} \cmidrule(lr){6-8} 
$K$   & Alg.         & UCaC & \#Mac & Violation (\%) & UCaC & \#Mac & Violation (\%) \\
\midrule
& BF-n$\sigma$ & 1.00 & 1.00 & 0.00 & 1.00 & 1.00 & 0.03 \\
& BF-UCaC & 0.94 & 0.71 & 0.04 & 0.96 & 0.75 & 0.47 \\
5 & BiHeu & 0.94 & 0.71 & 0.02 & 0.96 & 0.75 & 0.62 \\
& CSP-UCaC & 0.94 & 0.71 & 0.02 & 0.96 & 0.75 & 0.46 \\
& CSP-Mac & 1.02 & 0.71 & 0.06 & 1.04 & 0.75 & 0.90 \\
\midrule
& BF-n$\sigma$ & 1.00 & 1.00 & 0.00 & 1.00 & 1.00 & 0.01 \\
& BF-UCaC & 0.94 & 0.70 & 0.04 & 0.96 & 0.73 & 0.52 \\
10 & BiHeu & 0.93 & 0.70 & 0.07 & 0.95 & 0.73 & 0.79 \\
& CSP-UCaC & 0.94 & 0.70 & 0.07 & 0.95 & 0.73 & 0.80 \\
& CSP-Mac & 1.01 & 0.70 & 0.06 & 1.04 & 0.73 & 0.82 \\
\midrule
& BF-n$\sigma$ & 1.00 & 1.00 & 0.00 & 1.00 & 1.00 & 0.02 \\
& BF-UCaC & 0.94 & 0.70 & 0.06 & 0.96 & 0.76 & 0.69 \\
15 & BiHeu & 0.93 & 0.70 & 0.04 & 0.95 & 0.76 & 0.57 \\
& CSP-UCaC & 0.93 & 0.70 & 0.11 & 0.95 & 0.76 & 0.26 \\
& CSP-Mac & 1.0 & 0.70 & 0.15 & 1.03 & 0.76 & 0.57 \\
\midrule
& BF-n$\sigma$ & 1.00 & 1.00 & 0.00 & 1.00 & 1.00 & 0.00 \\
& BF-UCaC & 0.94 & 0.74 & 0.04 & 0.96 & 0.77 & 0.51 \\
20 & BiHeu & 0.93 & 0.74 & 0.07 & 0.95 & 0.77 & 0.69 \\
& CSP-UCaC & 0.94 & 0.74 & 0.02 & 0.96 & 0.77 & 0.26 \\
& CSP-Mac & 1.01 & 0.74 & 0.00 & 1.05 & 0.77 & 0.53 \\
\bottomrule
\end{tabular}
}
\caption{Performance comparison in the scale-down case. 
}
\label{tab:perf-scale-down}
\end{table}

\begin{table}
\centering
\scalebox{0.75} 
{
\begin{tabular}{ccccc|ccc}%
\toprule
& & \multicolumn{3}{c}{$\alpha = 99.9\%$} & \multicolumn{3}{c}{$\alpha = 99\%$}  \\
\cmidrule(lr){3-5} \cmidrule(lr){6-8} 
$K$   & Alg.         & UCaC & \#Mac & Violation (\%) & UCaC & \#Mac & Violation (\%) \\
\midrule
& BF-n$\sigma$ & 1.00 & 1.00 & 0.00 & 1.00 & 1.00 & 0.00 \\
& BF-UCaC & 0.94 & 0.66 & 0.06 & 0.96 & 0.70 & 0.79 \\
5 & BiHeu & 0.94 & 0.65 & 0.07 & 0.95 & 0.69 & 0.60 \\
& CSP-UCaC & 0.93 & 0.64 & 0.06 & 0.95 & 0.68 & 0.97 \\
& CSP-Mac & 0.93 & 0.64 & 0.09 & 0.95 & 0.68 & 0.85 \\
\midrule
& BF-n$\sigma$ & 1.00 & 1.00 & 0.00 & 1.00 & 1.00 & 0.00 \\
& BF-UCaC & 0.93 & 0.64 & 0.03 & 0.95 & 0.67 & 0.76 \\
10 & BiHeu & 0.93 & 0.64 & 0.08 & 0.95 & 0.67 & 0.79 \\
& CSP-UCaC & 0.92 & 0.63 & 0.13 & 0.94 & 0.66 & 0.86 \\
& CSP-Mac & 0.92 & 0.63 & 0.14 & 0.94 & 0.66 & 1.13 \\
\midrule
& BF-n$\sigma$ & 1.00 & 1.00 & 0.00 & 1.00 & 1.00 & 0.00 \\
& BF-UCaC & 0.93 & 0.66 & 0.07 & 0.95 & 0.69 & 0.66 \\
15 & BiHeu & 0.93 & 0.65 & 0.08 & 0.94 & 0.68 & 0.80 \\
& CSP-UCaC & 0.92 & 0.64 & 0.12 & 0.95 & 0.69 & 0.42 \\
& CSP-Mac & 0.92 & 0.64 & 0.12 & 0.95 & 0.69 & 0.56 \\
\midrule
& BF-n$\sigma$ & 1.00 & 1.00 & 0.00 & 1.00 & 1.00 & 0.00 \\
& BF-UCaC & 0.94 & 0.66 & 0.03 & 0.95 & 0.69 & 0.52 \\
20 & BiHeu & 0.93 & 0.65 & 0.00 & 0.95 & 0.68 & 0.96 \\
& CSP-UCaC & 0.94 & 0.68 & 0.07 & 0.95 & 0.70 & 0.70 \\
& CSP-Mac & 0.94 & 0.68 & 0.03 & 0.96 & 0.70 & 0.51 \\
\bottomrule
\end{tabular}
}
\caption{Performance comparison in the scale-up case.
}
\label{tab:perf-scale-up}
\end{table}

\subsubsection{Experiments on empty cluster}
We also test on the empty cluster.
As shown in Theorem~\ref{th:relaxed-consistency} and ~\ref{th:bound}, optimizing {\em UCaC} induces optimizing the number of used machines. 
Thus, we compare the performance of CSP-UCaC and CSP-Mac.\\
\textbf{Results:} ~ 
The results are shown in Table \ref{tab:relaxed-consistency}. 
Compared to CSP-Mac, CSP-UCaC has lower {\em UCaC} in every test case, and \#machines are the same or slightly larger as expected.
This empirically verifies that for empty cluster machines, our {\em UCaC}-based methods are consistent with the SBPP that optimizes the number of used machines.
For the violation rates, the two methods have similar performance and satisfy the given confidence levels in most cases.

\begin{table}
\centering
\scalebox{0.75} 
{
\begin{tabular}{ccccc|ccc}%
\toprule
& & \multicolumn{3}{c}{$\alpha = 99.9\%$} & \multicolumn{3}{c}{$\alpha = 99\%$}  \\
\cmidrule(lr){3-5} \cmidrule(lr){6-8} 
$K$   & Alg.         & UCaC & \#Mac & Violation (\%) & UCaC & \#Mac & Violation (\%) \\
\midrule
5 & CSP-UCaC & 32360 & 1035 & 0.1 & 29604 & 942 & 0.9 \\
& CSP-Mac & 32366 & 1035 & 0.1 & 29614 & 941 & 1.1 \\
\midrule
10 & CSP-UCaC & 33156 & 1056 & 0.2 & 30160 & 958 & 0.8 \\
& CSP-Mac & 33177 & 1054 & 0.1 & 30163 & 957 & 1.0 \\
\midrule
15 & CSP-UCaC & 33708 & 1070 & 0.1 & 30670 & 1008 & 0.5 \\
& CSP-Mac & 33713 & 1069 & 0.1 & 30721 & 1005 & 0.5 \\
\midrule
20 & CSP-UCaC & 33413 & 1118 & 0.0 & 30255 & 995 & 0.4 \\
& CSP-Mac & 33456 & 1110 & 0.1 & 30295 & 994 & 0.7 \\
\bottomrule
\end{tabular}
}
\caption{Performance of bin packing on the empty cluster.}
\label{tab:relaxed-consistency}
\end{table}

\subsubsection{Comprehensive comparison}
Fig.~\ref{fig:bp-visual} shows the machine {\em UCaC} usages visualization of the solution given by different algorithms in the nonempty and empty cluster cases.  
The results are from the same initial cluster state with $K=20$ and $\alpha=0.999$.

As shown in Fig.~\ref{subfig:scale-down}, BF-n$\sigma$ uses much more machines than others, although its per-machine {\em UCaC} is lower.
For CSP-Mac, almost all used machines have higher {\em UCaC}, as the algorithm fails to optimize the resource usage.
In contrast, our three {\em UCaC}-based algorithms can effectively optimize both the {\em UCaC} in both nonempty and empty machines cases, as optimizing {\em UCaC} is able to take full advantage of the risk-pooling effect to optimize the cluster resource usage.

As shown in Fig.~\ref{subfig:scale-up} and Fig.~\ref{subfig:empty}, the CSP-UCaC and CSP-Mac show similar machine {\em UCaC} usage distributions in the scale-up case of nonempty cluster and the case of empty cluster.
This indicates that when new machines are needed to allocate containers, optimizing {\em UCaC} and \#machines leads to close solutions. 
In conclusion, our proposed {\em UCaC}-based problem formulation and solving algorithms give overall better performance on all three cases.

\begin{figure}
    \begin{subfigure}{0.4\textwidth}
      \centering
      \includegraphics[width=.9\linewidth]{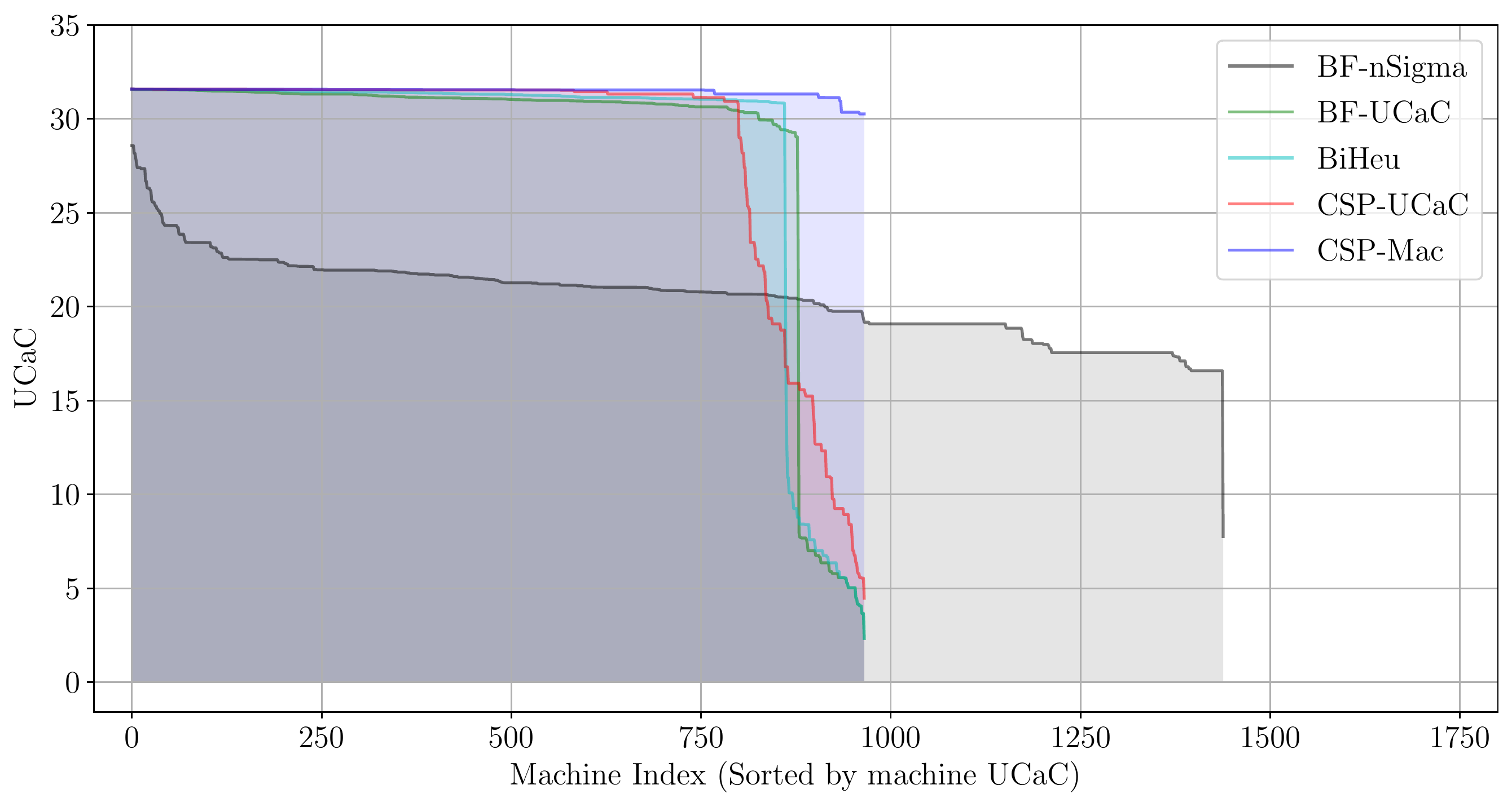}
      \caption{Nonempty cluster case: Scale-down}
      \label{subfig:scale-down}
    \end{subfigure}%
    \\
    \begin{subfigure}{0.4\textwidth}
      \centering
      \includegraphics[width=.9\linewidth]{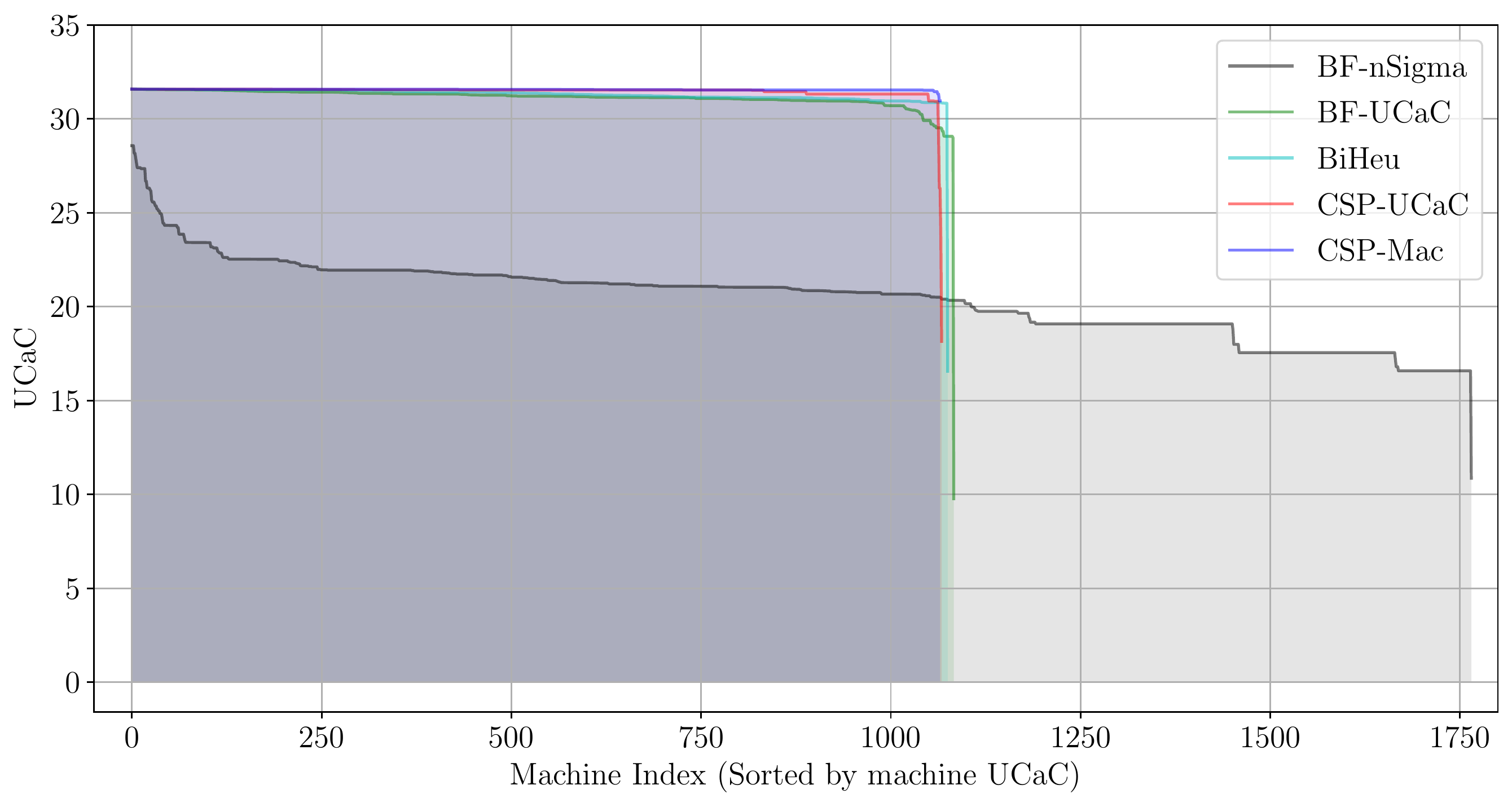}
      \caption{Nonempty cluster case: Scale-up}
      \label{subfig:scale-up}
    \end{subfigure}%
    \\
    \begin{subfigure}{0.4\textwidth}
      \centering
      \includegraphics[width=.9\linewidth]{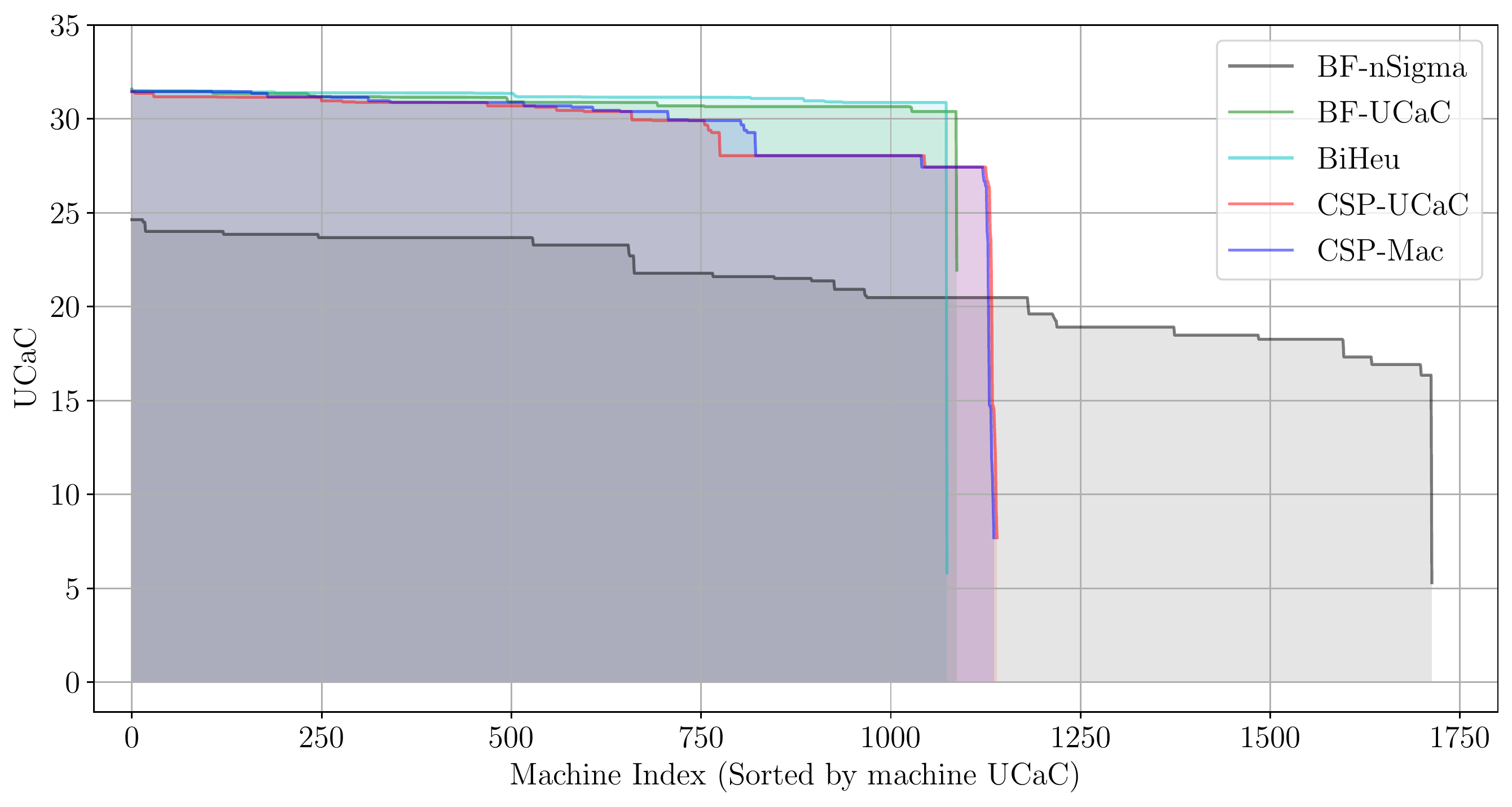}
      \caption{Empty cluster case}
      \label{subfig:empty}
    \end{subfigure}%
\caption{Comparison by visualizing UCaC on machines.}
\label{fig:bp-visual}
\end{figure}
\subsubsection{Scalability analysis.} 
In the real practice of SBPP, especially in the area of cloud service, one major concern is the solving time. Consider the algorithm complexity. Suppose $K$, $M$, $N$ are the number of services, the number of requested containers, and the number of machines in respective, and $|P|$ is the number of patterns used in CSP solvers. Best-Fit heuristics algorithms including BF-n$\sigma$ and BF-UCAC have complexity $O(MN)$, while BiHeu has $O(K\log K + N\log N + MN)$ where the first terms are for sorting services and machines respectively. For CSP methods, there are two computing phases -- columns generation and cutting stock problem optimization; among which the first phase depends on whether we have good predefined initial patterns as well as the generation algorithm which is hard to estimate, while the second phase's complexity is rough $O(a * |P|N)$ where $a$ depends on the used solver. 
In our experiments, with a platform of 8 cores of Intel Xeon E5-2690 and 128GiB memory, the run time is as follows. For the largest container requests $M=14,213$, BF-UCaC and BiHeu consume 14 and 0.7$s$ in respect, while BF-n$\sigma$ consumes 0.4$s$. With respect to CSP-UCaC, our most time-consuming experiments is for $K=20, \alpha=0.999$, where the pattern generation (column generation) with strict convergence consumes 14,697$s$, the cutting stock solving consumes 154$s$. However, for most cases, especially when $K \leq 15$, the pattern generation typically consumes tens of seconds, and the solving cutting stock consumes less than 1$s$. 

\subsection{Experiments on Real Traces}
\label{sub:experiments-real}
We evaluate the proposed methods on a 9-day regional trace data of a first-party application. The dataset contains 17 main services and over 10,000 containers at peak time in total; the details are given in Appendix \ref{app:dataset}. Since the focus of this paper is on the container allocation at usage peaks, we directly summarize the empirical peak resource usage distribution from data. Also, we exclude 2 weekend days in the 9-day period, as at weekends workloads are significantly low and no nontrivial container allocation was happened.

In all, we compare our proposed methods with two baselines (BF-n$\sigma$ and CSP-Mac) on container allocations of 7 days with only the first day initialized as an empty cluster. Three metrics are used for evaluation: UCaC at confidence $99.9\%$, the average \#machines, and total machine capacity violations in the 7-day period. We perform 5 tests with different random seeds and report the average values.

The results are shown in Table \ref{tab:experiments-real}.
For the BF-$n\sigma$ methods, the BF-$3.09\sigma$ uses more machines and has higher {\em UCaC} than all other methods. The performance of BF-$n\sigma$ can be improved by changing $n$ (or $\alpha$), and the best $n$ is 1.23 ($\alpha=0.89$) through our experiments, while the performance is still worse than {\em UCaC} based methods.

Compared to CSP-Mac, the best method that optimizing the number of used machines under chance constraints, our {\em UCaC}-based methods have significantly lower UCaC and even less number of used machines averaged on 7 days, with only a slight increase of cumulative violations in 7 days. For the better cumulative performance on the number of used machines, the primary reason is that UCaC-based methods explicitly consider the effect of non-empty machines while CSP-Mac can not do that. In summary, in practice of continuous container allocations our {\em UCaC}-based methods should be able offer better allocation strategies that can balance resource usage and violations than the baselines.

\begin{table}[]
\centering
\scalebox{0.75} 
{
    \begin{tabular}{l|c|c|c|}
    \toprule
        Algorithm & UCaC$_{99.9\%}$ & \#Machines & TotalViolations \\
        \midrule
        BF-$1.15\sigma$ & 41293.5 & 1305 & 4.4 \\
        BF-$1.23\sigma$ & 41558.8 & 1337 & 3.8 \\
        BF-$3.09\sigma$ & 46038.3 & 2144 & 0.0 \\
        CSP-Mac & 42582.0 & 1395 & 1.9 \\
        \midrule
        BF-UCaC & 41738.5 & 1350 & 2.6 \\
        BiHeu & 41450.4 & 1333 & 2.4 \\
        CSP-UCaC & 41402.3 & 1332 & 2.0 \\
    \bottomrule
    \end{tabular}
}
\caption{Performance of 7-day continuous experiments on a real trace, averaged on 5 simulations with different seeds.}
\label{tab:experiments-real}
\end{table}

\section{Related work}
\label{sec:related-work}

\textbf{Stochastic Bin Packing Problem.} Bin packing Problem (BPP) is a classic combinatorial optimization problem and has been thoroughly investigated through the past two decades; see \cite{delorme2016bin} and \cite{christensen2016multidimensional} for detailed review. Our work specifically focuses on the stochastic bin packing problem (SBPP), an extension of the classic BPP, where the item sizes are stochastic. The physical capacity constraint may be violated with a small probability, providing the chance of over-commitment. Note that the term SBPP was also used to denote other extensions to the BPP, such as the item profits when being taken \cite{perboli2012stochastic}, which is not our concerning scope. \citet{coffman1980stochastic} study a stochastic model in which an arbitrary distribution of item sizes is assumed. They obtained an asymptotic expected bin occupancy  for the next-fit algorithm. Different from this work which assumed the item size to be a known random variable, \citet{cohen2019} study the case that the item size follows unknown distribution and focus on computing a solution that is feasible with high probability before observing the actual sizes. They model it as chance constraints and propose a formulation that transforms each chance constraint into a sub-modular function. They developed a family of online algorithms and provided a constant factor guarantee from optimal. \citet{martinovic2021} derive several (improved) lower and upper bounds and study their worst-case performance metrics. They also describe various linearization techniques to overcome the drawback of nonlinearity in the original model proposed by \citet{cohen2019}. Besides,  \citet{kuccukyavuz2021chance} gives a broader survey on applications of chance-constrained optimization.

\textbf{Resource over-commitment and virtual machine consolidation in cloud.} Overcommitment [\citealp{jin2020improving,sun2018rose}] in which the sum of allocated resources to multiple containers is larger than physical capacity, is proposed to increase the resource utilization in recent cloud architecture like Borg \cite{borg}. With proper consolidation of multiple containers~\cite{borg-next}, statistical multiplexing to over-commit resources can avoid almost all resource violations. Such consolidation algorithms can also relate to research on virtual machine consolidation~\cite{roytman2013algorithm}, which searches the optimal combination of workloads or containers to avoid shared resource contention. Orthogonal to better scheduling, others focus on the prediction of container utilization. \citet{noman2021} propose peak oracle, which is the maximum idle capacity that a machine can advertise without violating any performance. Autopilot \cite{rzadca2020autopilot} uses a decision-focused prediction approach to do vertical scaling to automatically set the resource to match the container’s future peak. Besides, \citet{util-pred} is an earlier work that advises container scheduling based on prediction on container's future resource usage. 

\textbf{Adoption of Solvers in Resource allocation}
Recently, integrating solvers for mathematical programming into resource allocation has become popular in system communities \cite{narayanan2021solving, ras2021}.
\citet{narayanan2021solving} noticed that many allocation problems in the computer system are granular and proposed an efficient new method that splits the problem into
smaller problems that can be solved by exact solvers. RAS (\cite{ras2021}) is integrated with exact solvers to scale
resource allocation to problems with a larger size.

\section{Conclusions}
In this paper, we introduce and formulate the problem of stochastic bin packing on nonempty machines, which was not well realized in existing operational research. In particular, by introducing a new optimization metric {\em UCaC}, we propose a unified problem formulation for the SBPP with both empty and nonempty machines. Further, we designed solving algorithms of both heuristics and cutting-stock based exact algorithms. Extensive experiments on both synthetic and real cloud traces demonstrate that our {\em UCaC}-based optimization methodology performs better than existing approaches that optimize the number of used machines. Recently, integrating solvers for mathematical programming into scheduling has become popular in system communities \cite{narayanan2021solving, ras2021}. As part of this trend, this paper makes the very first step for the stochastic bin packing on nonempty machines, which is an important problem in cloud resource scheduling.

\begin{acks}
We thank Fangkai Yang, Jue Zhang, Tao Shen, Rahul Mohana Narayanamurthy, Terry Yang, Chetan Bansal, Senthil kumaran Baladhandayutham, Victor Rühle, Randy Lehner, Jim Kleewein, Silvia Yu and Andrew Zhou for the insights and discussions on this work and their support throughout the project.
\end{acks}

\bibliographystyle{ACM-Reference-Format}
\bibliography{references}

\clearpage
\appendix

\section{Proof of Theorem~\ref{th:bound}}
\label{app:proof-of-th5}

\begin{proof}
Let $N$ be the number of machines that the formulation with UCaC minimized purchases when serving all $J$ items.
For any machine $1 \leq i \leq N$, $S_i$ is defined to be the set of jobs assigned to machine $i$. 
For any pair of machines $i  \neq i_0 $, when the solution is optimal, the set $S_i \cup  S_{i_0}$ is infeasible for the modified capacity constraint, i.e., the items in machine  $S_i $ and $ S_{i_0}$  could not be assigned to one machine when the UCaC used is minimized. Otherwise, since $ \sqrt{\sum_{j \in S_i \cup {S_{i_0}}} b_j}< \sqrt{\sum_{j \in S_i} b_j}+\sqrt{\sum_{j \in S_{i_0}} b_j}$, we can place the items in $S_i $ and  $ S_{i_0}$ in machine $i$ (or $i_0$) and reduce the UCaC, which contradicts the assumption that the UCaC used is optimized. By taking $b_j{^*} = \frac{b_j D(\alpha)^2}{V^2 }, \mu_{j}{^*} = \frac{\mu_{j}}{V} $, the modified capacity constraint can be normalized as $\sum_{j=1}^{K}{x_{i, j}\mu_j^*} +  \sqrt{\sum_{j=1}^{K}{x_{i, j} b_j^*} } \leq 1 $.

\begin{lemma}
For any infeasible item set $T$, we have $\sum_{j\in{T}}(\mu_j^*+b_j^*)>\frac{3}{4}$
\end{lemma}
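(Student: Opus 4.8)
The plan is to reduce the lemma to a single scalar inequality by aggregating the per-item normalized quantities. Write $m := \sum_{j\in T}\mu_j^*$ and $s := \sum_{j\in T}b_j^*$ for the total normalized mean and total normalized uncertainty of the set $T$. By the normalized capacity constraint, $T$ is feasible exactly when $\sum_{j\in T}\mu_j^* + \sqrt{\sum_{j\in T}b_j^*}\le 1$, so the hypothesis that $T$ is infeasible means precisely $m + \sqrt{s} > 1$, and the goal $\sum_{j\in T}(\mu_j^*+b_j^*) > \tfrac34$ reads $m + s > \tfrac34$. The entire lemma thus collapses to the implication: for $m,s\ge 0$, if $m+\sqrt{s}>1$ then $m+s>\tfrac34$.

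Next I would set $u := \sqrt{s}\ge 0$, so the hypothesis becomes $m + u > 1$ and the goal becomes $m + u^2 > \tfrac34$. The key step is to add $u^2 - u$ to both sides of the hypothesis, obtaining $m + u^2 > 1 - u + u^2$, and then complete the square on the right-hand side:
\[
1 - u + u^2 = \left(u - \tfrac12\right)^2 + \tfrac34 \ge \tfrac34 .
\]
Chaining the two inequalities gives $m + s = m + u^2 > \tfrac34$, which is the claim.

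The only point needing a little care is that the argument must be valid uniformly over the whole range of $s$. A naive route that rewrites the hypothesis as $m > 1 - \sqrt{s}$ and substitutes runs into trouble when $1-\sqrt{s}<0$ (that is, $s>1$), where the lower bound on $m$ is vacuous. Phrasing the step as ``add $u^2-u$ to both sides of $m+u>1$'' avoids this entirely, since adding a fixed quantity to both sides is legitimate for any real $u$, and the completed square is nonnegative regardless of the sign of $u-\tfrac12$. So this is the main (and essentially the only) obstacle, and it is resolved purely by the completing-the-square identity. I would close by noting that the constant $\tfrac34$ is tight: equality in the square forces $u=\tfrac12$, i.e. $s=\tfrac14$ and $m=\tfrac12$, which is exactly the boundary case $m+\sqrt{s}=1$, so the bound cannot be improved.
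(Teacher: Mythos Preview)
Your proof is correct and follows essentially the same completing-the-square argument as the paper: both reduce to showing that $m+\sqrt{s}>1$ implies $m+s>\tfrac34$ via the identity $1-u+u^2=(u-\tfrac12)^2+\tfrac34$. The only cosmetic difference is that the paper writes $y>1-x$, handles the case $x>1$ separately, and then squares (completing the square in $x$), whereas you add $u^2-u$ to both sides (completing the square in $u$), which neatly avoids that case split.
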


\begin{proof}{Proof}
For any infeasible set T, we have by definition $\sum_{j\in T}u_j^*+\sqrt{\sum_{j \in T}b_j^*}$ > 1. We denote $x=\sum_{j \in T} u_j^*$, and $y=\sqrt{\sum_{j\in T}b_j^*}$. Then  $y > 1-x$. If $x>1$, this lemma holds. Otherwise, we obtain :
\begin{equation}
    x+y^2>x+(1-x)^2=x^2-x+1=(x-\frac{1}{2})^2+\frac{3}{4}\geq \frac{3}{4}
\end{equation}
\end{proof}

Given $S_i \cup {S_{i_0}}$, we have $\sum_{j\in{S_i \cup {S_{i_0}}}}(\mu_j^*+b_j^*)>\frac{3}{4}$. By summing up this inequality for all pairs of machines i and $i_0$, we obtain:
\begin{equation}
    \sum_{1\leq i \leq N, 1\leq i_0 \leq N, i \neq i_0} \sum_{j \in S_i \cup S_{i_0}}(u_j^* +b_j^*) >\frac{3}{4}\tbinom{N}{2} \times 2
\end{equation}
 The left hand side of this inequality is equal to $2(N-1)\sum_{1\leq j \leq J }(u_j^* +b_j^*)$. This is because that for each machine $i$, it appears  $2(N-1)$ times, thus the items in each machine also appear $2(N-1)$ times. Therefore, 
\begin{equation}
    \frac{3}{8}N<\sum_{1 \leq j \leq J} (u_j^*+b_j^*)
\end{equation}

For the optimal assignment, we use $S_i^*$ to denote the set of containers in machine i. For containers in  $S_i^*$, we have $\sum_{j\in S_i^*}u_j^*+\sqrt{\sum_{j \in S_i^*}b_j^*}$ <1. Since $u_j^*$ >0 and $b_j^* >0$, we have that $\sum_{j \in S_i^*}b_j^*<1$. Therefore $\sum_{j\in S_i^*}u_j^*+\sum_{j \in S_i^*}b_j^* < \sum_{j\in S_i^*}u_j^*+\sqrt{\sum_{j \in S_i^*}b_j^*} <1$. Summing up for all of the machine $i$, we have:
\begin{equation}
  \sum_{1 \leq j \leq J} (u_j^*+b_j^*)=\sum_{1\leq i \leq N^*}(\sum_{j\in S_i^*}u_j^*b_j^*)< N^* 
\end{equation}
Therefore, we obtain that $ \frac{3}{8}N< N^*$, that is $N<\frac{8}{3}N^*$.
\end{proof}

\section{Details of Bi-Heuristics Algorithm}
Algorithm ~\ref{alg:solver-bilevel-heuristics} descibes the detailed procedures of our proposed online bi-heustics in Section~\label{sub:solve-offline-heuristic}.

\begin{algorithm}
\DontPrintSemicolon
\caption{Bi-level Heuristic for SBPP}
\label{alg:solver-bilevel-heuristics}

\SetAlgoVlined
\SetKwInOut{Input}{Input}
\SetKwInOut{Output}{Output}
\SetKw{KwBy}{by}

\Input{Request $m \in \I^K$}
\Input{Parameters of Equation \ref{eq:reformulation}: $\{ (\mu_j, b_j), z, V, \alpha\}$}
\Output{Mapping of containers to machines $x \in I^{N \times K}$}

\sf{
\Begin{
    \tcp{Presort}
    \nl Sort machines in non-increasing order of $B_i, ~ i \in [N]$; \\
    \nl Sort services in non-increasing order of $b_j / \mu_j, ~ j \in [K]$;\\
    
    \nl $r \gets m \in \I^K$; \\
    
    \tcp{Allocate machine for containers}
    \nl \ForEach{$i = 1 \to N$}{
        \nl \ForEach{$j = 1 \to K$}{
            \nl {$x_{i, j} \gets \max_{w \in I, 0 \leq w \leq r_j}{w}$, \\ 
                \quad \quad \quad s.t.~ $w \mu_j + D(\alpha) \sqrt{w b_j + B_j} \leq V - C_i$};\\
            \nl {$B_i \gets B_i + x_{i,j} b_j$};\\
            \nl {$C_i \gets C_i + x_{i,j} \mu_j$};\\
            \nl {$r_j \gets r_j - x_{i,j}$};\\
            
            \nl \If{$V-C_i \leq \sigma$}{
                \nl Break; \\
            }
        }
    }
}
}
\end{algorithm}

\section{Details of Section \ref{sub:solve-offline-cuttingstock}}
\label{app:solve-sbp-empty}

In this appendix, we show details of the cutting stock formulation and the column generation method to generate the pattern set.

\subsection{The cutting stock problem}
The cutting stock formulation (\ref{eq:cutting-stock}) can be used to solve the bin packing problem by finding a pattern set $P$ using the column generation technique \cite{delorme2016bin}. 
A pattern $p\in \R^K$ in  $P$ is a combination of items that satisfies the capacity constraint.
Let $P \in \R^{K \times L}$ be the pattern set containing $L$ patterns, $m\in \R^K$ be the requests of the $K$ services and $v_j$ be the number of times each pattern $p_j$ is used.

The cutting stock problem is defined as follows,
\begin{equation}
\label{eq:cutting-stock}
\begin{aligned}
\min_{v} & \sum_{j=1}^{L} v_j \\
\text{ s.t. } & \sum_{j=1}^{L} p_{kj} v_j \geq m_k, \quad k \in [K] \\
& v_j \in \I, \quad j \in [L], 
\end{aligned}
\end{equation}
where $P \in \R^{K \times L}$ is the pattern set, each column of $P$ is a single pattern and $v_j$ is the number of times pattern $p_j$ is used. 
For each bin, the items are allocated according to the pattern.
Thus, the number of patterns used ($\sum_{j=1}^{L} v_j$) is equivalent to the number of bins used.

The relaxation of the cutting stock problem is defined as
\begin{equation}
\label{eq:cutting-stock-relax}
\begin{aligned}
\min_{x} & \sum_{j=1}^{L} v_j \\
\text{ s.t. } & \sum_{j=1}^{L} p_{kj} v_j \geq m_k, \quad k \in [K] \\
& v_j \geq 0,  v_j \in \R, \quad j \in [L].
\end{aligned}
\end{equation}

\subsection{The column generation method}
The column generation method solves the relaxation of the cutting stock problem (\ref{eq:cutting-stock-relax}) and generates a pattern set $P$ iteratively.
The relaxation of the cutting stock problem (\ref{eq:cutting-stock-relax}) is called the restricted master problem (RMP) in the column generation method.

The dual of RMP is given by:
\begin{equation}
\label{eq:cutting-stock-relax-dual}
\begin{aligned}
\max_{\pi} ~& \pi ^\top m \\
\text{ s.t. } ~& \sum_{i=1}^{K}\pi_i p_{ij} \leq 1, \quad j \in [L] \\
& \pi_i \geq 0, \quad i \in [K].
\end{aligned}
\end{equation}
Let $\bar \pi$ be the optimal solution of the dual problem (\ref{eq:cutting-stock-relax-dual}).
A pattern $v$ is computed by solving the subproblem:
\begin{equation}
\label{eq:cutting-stock-sub}
\begin{aligned}
\min_{v} ~& c = 1 - \bar\pi^\top v \\
\text{ s.t. } ~& v \in \mathcal{F} \\
\end{aligned}
\end{equation}
where $\mathcal{F}$ is the set of feasible patterns.
\begin{remark}
In our setting, $\mathcal{F}$ is the set of patterns that satisfies the 
capacity constraint of CPU usage (\ref{eq:pattern1}).
The cloud provider can also add constraints of other resources (memory, etc.) into $\mathcal{F}$.
\end{remark}

The column generation method starts with a given pattern set $P^{(0)}$, which can be initialized as a diagonal matrix.
In our setting, $P^{(0)}_{ii}$ is set to the maximum number of containers of service $i$ that can fit into the machine.

At each iteration, the column generation method first solves the dual of RMP with the pattern set $P$.
Then, the subproblem (\ref{eq:cutting-stock-sub}) is solved using the solution from the dual.
If the reduced cost $c = 1 - \bar \pi^\top v^* < 0$, then the new pattern $v^*$ is added to the pattern set $P$.
The procedure is iterated until no new pattern with negative reduced cost can be found.

\section{Illustration of Cluster}
\label{app:cluster}

Fig.~\ref{fig:cluster} illustrates a simple cluster layout, where 6 containers of 3 services (say A, B, and C) are allocated to machine 1 and machine 2, while machine 3 is still empty (i.e., has no assigned containers).
\begin{figure} 
  \centering
  \includegraphics[width=.65\linewidth]{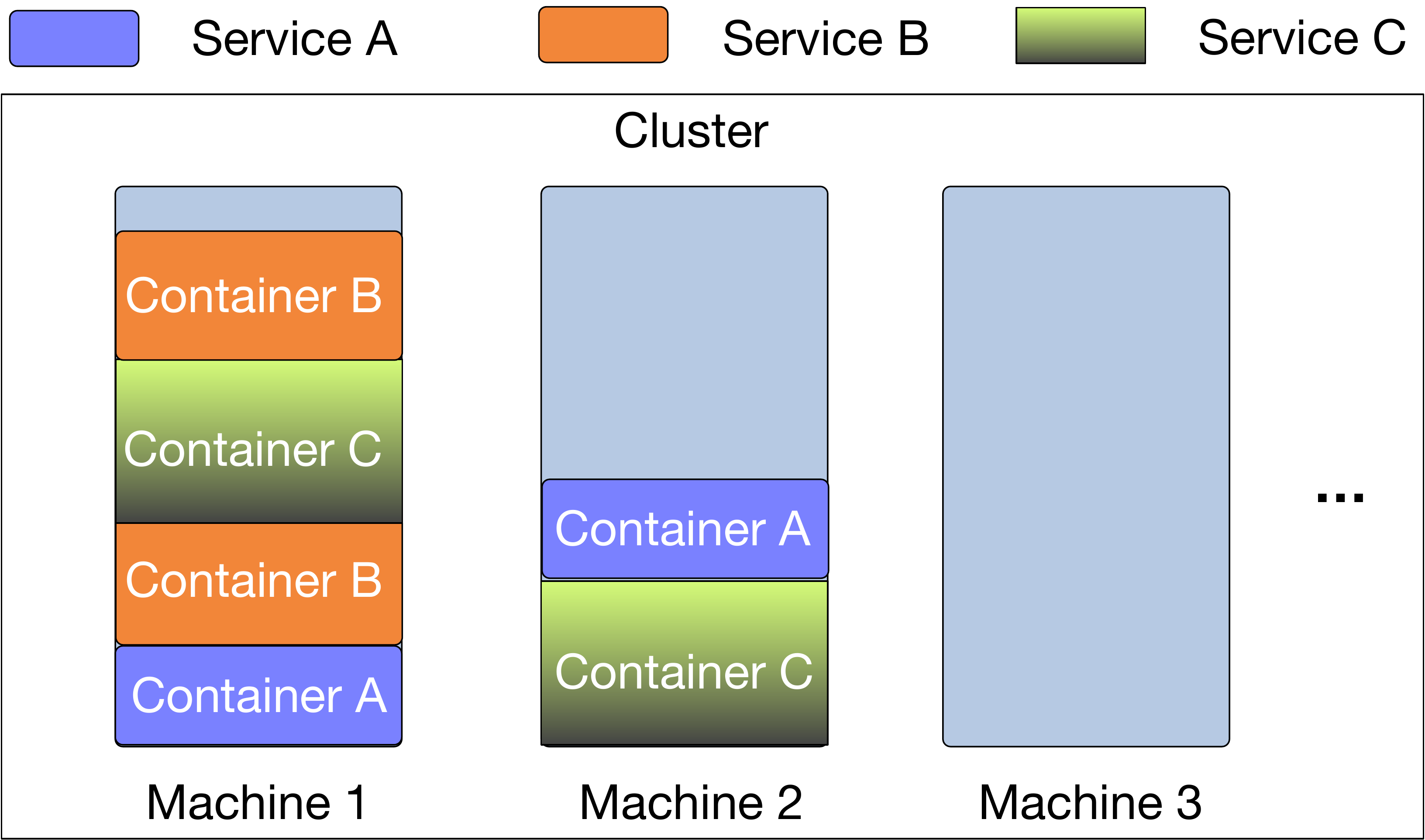}
  \caption{Abstract cluster, machines, containers and services.}
  \label{fig:cluster}
\end{figure}

\section{Dataset details}
\label{app:dataset}

\subsection{Real traces}
Our used dataset is sampled from a real workload which consists of 17 main services. For data privacy compliance, we can't release the full data. Instead, we give the summary of necessary statistics closely related to our experiments. Fig.~\ref{fig:peak_ts_stats} presents the mean cpu core usage of 17 services at {\em peak} time of each day, as well as the ratio of standard deviation against mean value of cpu core usage. As shown, for most services, the cpu usage on weekends is very low and the relative deviation is high, while in weekdays the mean cpu usage is high with a relatively reasonable deviation. Note that in our experiments of section \ref{sub:experiments-real} the container bin packing is only done on 7 weekdays.

\begin{figure}
\begin{subfigure}{.45\textwidth}
  \centering
  \includegraphics[width=.9\linewidth]{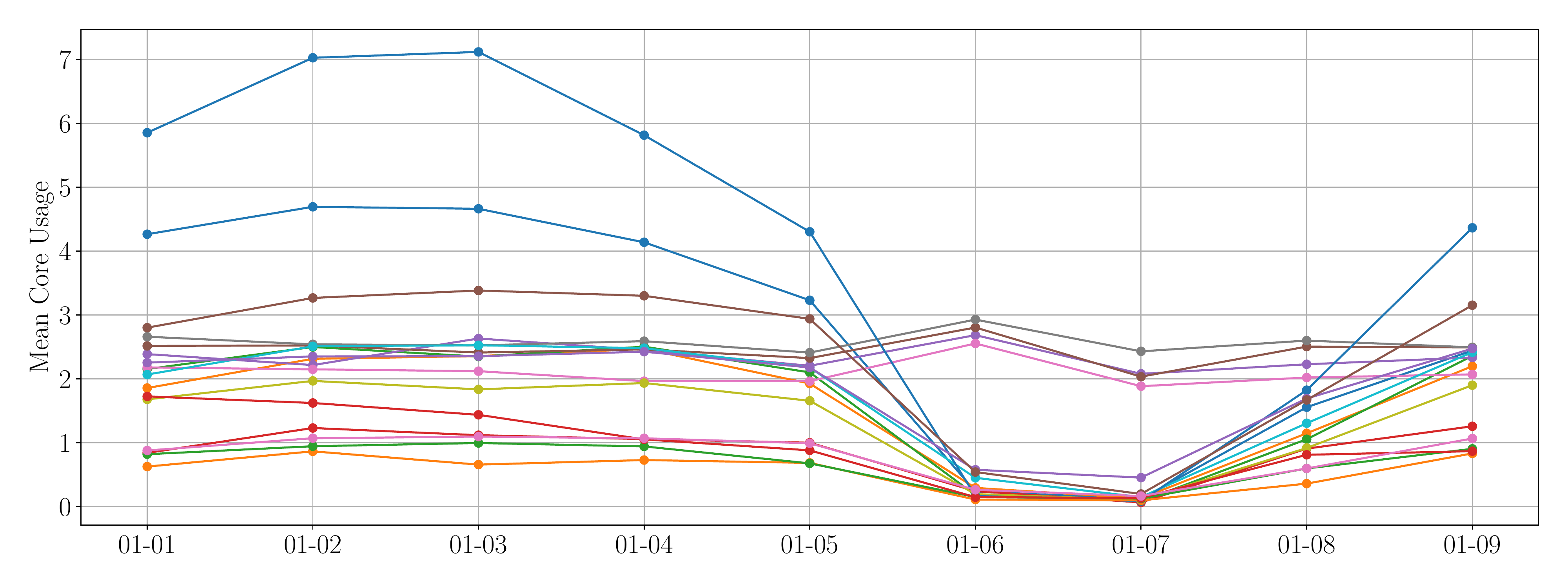}
  \caption{Mean CPU core usage at peaks}
  \label{fig:peak_ts_mean}
\end{subfigure}%
\\
\begin{subfigure}{.45\textwidth}
  \centering
  \includegraphics[width=.9\linewidth]{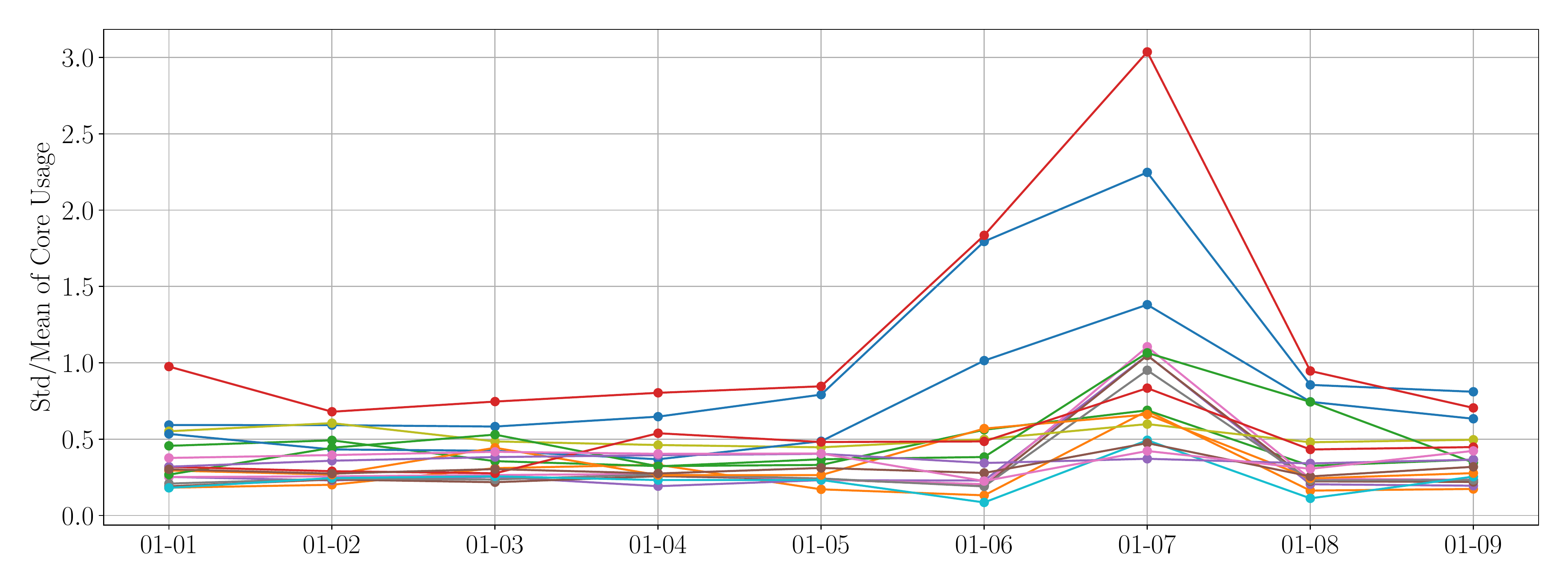}
  \caption{Ratio of Standard Deviation against Mean at peaks}
  \label{fig:peak_ts_ratio}
\end{subfigure}
\caption{Per-service statistics at daily Peaks in real traces}
\label{fig:peak_ts_stats}
\end{figure}

\begin{table}
    \centering
    \scalebox{0.75} 
    {
    \begin{tabular}{c|ccccccccc}
        \toprule
        Mean &  6.18 & 2.47 & 1.07 & 4.12 & 1.06 & 0.73 & 1.94 & 2.48 & 2.42\\
        Std & 1.73 & 0.47 & 0.43 & 2.69 & 0.85 & 0.19 & 0.9 & 0.82 & 0.97\\
        \#Containers & 270 & 55 & 1618 & 904 & 576 & 1085 & 1035 & 118 & 1450\\
        \midrule
        Remove Rate & 0.5 & 0.3 & 0.8 & 0.5 & 0.8 & 0.8 & 0.5 & 0.5 & 0.5\\
        \midrule
        \midrule
        Mean &  2.49 & 0.97 & 2.46 & 2.52 & 1.06 & 2.59 & 1.96 & 3.33 & \\
        Std & 0.62 & 0.31 & 0.62 & 0.84 & 0.57 & 0.7 & 0.55 & 0.9 & \\
        \#Containers & 313 & 44 & 544 & 697 & 427 & 363 & 360 & 701 & \\
        \midrule
        Remove Rate & 0.5 & 0.8 & 0.3 & 0.5 & 0.8 & 0.3 & 0.3 & 0.5 & \\
        \bottomrule
    \end{tabular}
    }
    \caption{Per-service statistics in the synthetic dataset.}
    \label{tab:synthetic_stats}
\end{table}

\subsection{Synthetic datasets}
\label{sub:synthetic_datasets}
The base per-service container distributions and numbers are summarized from a weekday of the real trace, {\em with some re-scaling for data privacy requirements}. Their statistics are given in Table~\ref{tab:synthetic_stats} where each column is for a service. Besides, the table also provides other parameters for simulation, including the number of containers per service in the base layout, and the rates that used to remove containers per service to construct the initial nonempty layout.

\end{document}